\newtheorem{thm}{Theorem}[section]
\newtheorem{cor}[thm]{Corollary}
\newtheorem{lem}[thm]{Lemma}
\newtheorem{prop}[thm]{Proposition}
\theoremstyle{definition}
\newtheorem{rem}[thm]{Remark}
\newtheorem{example}[thm]{Example}
\newtheorem{quest}[thm]{Question}
\numberwithin{equation}{section}
\newcommand{\spn}{\mathrm{span\,}}
\def\LL{\mathcal L}
\begin{document}
\title[Kauffman-Jones polynomial of a curve]
{Kauffman-Jones polynomial of a curve on a surface}

\author{Shinji Fukuhara}
\address{Department of Mathematics, Tsuda College, Tsuda-machi 2-1-1,
  Kodaira-shi, Tokyo 187-8577, Japan}
\email{fukuhara@tsuda.ac.jp}

\author{Yusuke Kuno}
\address{Department of Mathematics, Tsuda College, Tsuda-machi 2-1-1,
  Kodaira-shi, Tokyo 187-8577, Japan}
\email{kunotti@tsuda.ac.jp}

\date{}
\subjclass[2010]{Primary 57M25; Secondary 57N05}
\keywords{Kauffman-Jones polynomial, curves on surfaces, linear chord diagrams}

\begin{abstract}
We introduce a Kauffman-Jones type polynomial $\LL_{\gamma}(A)$ 
for a curve $\gamma$ on an oriented surface, whose endpoints are on the boundary of the surface.
The polynomial $\LL_{\gamma}(A)$ is a Laurent polynomial 
in one variable $A$ and is an invariant of the homotopy class of $\gamma$.
As an application, we obtain an estimate in terms of the span of $\LL_{\gamma}(A)$ for the minimum
self-intersection number of a curve within its homotopy class.
We then give a chord diagrammatic description of $\LL_{\gamma}(A)$ and show some computational results on the span of $\LL_{\gamma}(A)$.
\end{abstract}

\maketitle

\section{Introduction}

Let $S$ be an oriented $C^{\infty}$-surface with non-empty boundary $\partial S$.
By a \emph{curve} on $S$, we mean a $C^{\infty}$-immersion $\gamma$ from the unit interval $I=[0,1]$ to $S$, which has only transverse double points as its singularities and satisfies
$\gamma^{-1}(\partial S)=\{ 0,1\}$ with $\gamma(0)\neq \gamma(1)$.

In this article, we consider curves on $S$ from the view point of virtual knots \cite{Ka97} or equivalently, abstract link diagrams \cite{KaKa}, with emphasis on their invariants coming from the Kauffman bracket \cite{KA1}.
More concretely, we introduce Laurent polynomials
$\langle D_{\gamma} \rangle$ and $\LL_{\gamma}(A)$ 
in one variable $A$.
We show that the span of these polynomials can be used for
estimating the number of double points of $\gamma$.
In fact, the polynomials $\langle D_{\gamma} \rangle$ and $\LL_{\gamma}(A)$ depend only on 
combinatorics of the image of the curve $\gamma$
in its regular neighborhood in $S$.
Based on this fact, we then give a chord diagrammatic description of
these polynomials.
An advantage of being free from the ambient surface $S$ is
that it becomes easy to provide and compute examples.
In \S 4, we show some computational results on the span
of $\langle D_{\gamma} \rangle$ from this point of view.

In the rest of this section,
we describe main constructions and results.
Some proofs will be postponed to \S 2.

We begin with terminology.
Let $X$ be a compact 1-manifold.
Namely, $X$ is a disjoint union of finitely many $I$'s and circles:
$$X=I\sqcup\dots\sqcup I\sqcup S^1\sqcup\dots\sqcup S^1.$$
A $C^{\infty}$-immersion $f:X\to S$ is called \emph{generic} if it has only transeverse double points as its singularities, $f^{-1}(\partial S)=\partial X$, and $f|_{\partial X}$ is injective.
A \emph{generalized link diagram} on $S$ is a subset of $S$ of the form $D=f(X)$ for some generic immersion $f: X\to S$, endowed with a choice of crossing to each double point of $D$. See Figures \ref{crossing} and \ref{gld}.

\begin{figure}
\begin{minipage}{0.45\hsize}
  \centering
  \includegraphics[width=12em]{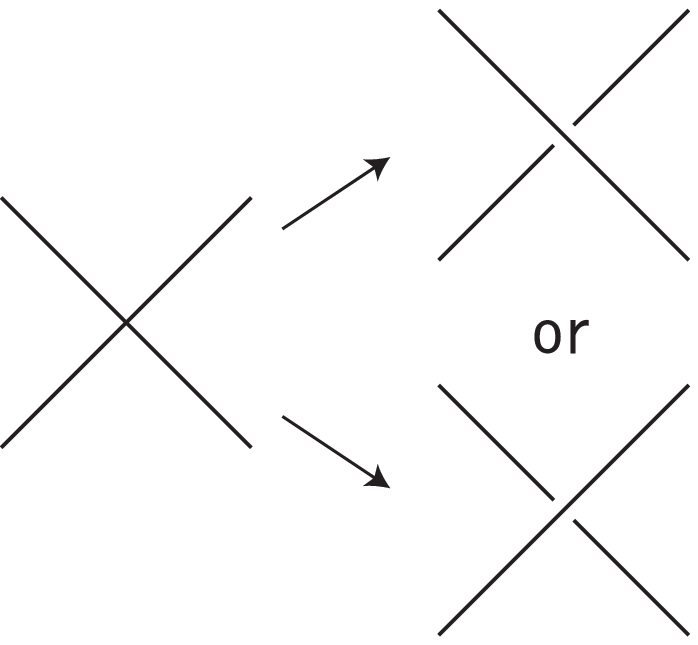}
  \caption{a choice of under- and overcrossing}
  \label{crossing}
\end{minipage}
\hspace{1cm}
\begin{minipage}{0.45\hsize}
  \centering
  \includegraphics[width=8em]{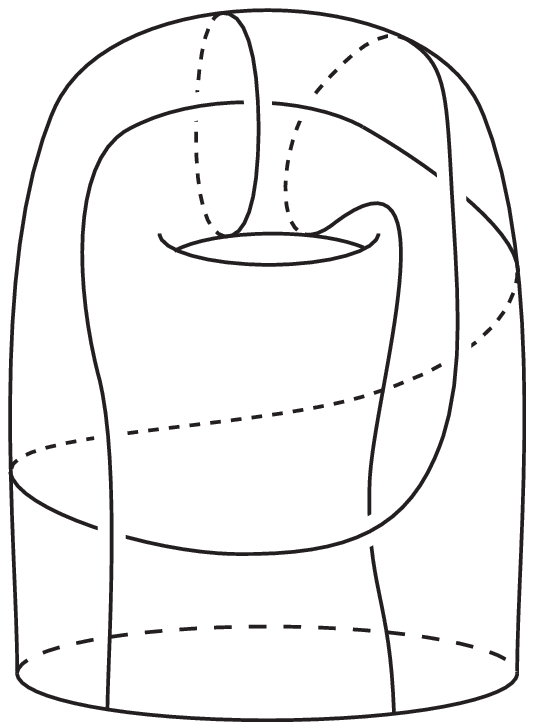}
  \caption{an example of a generalized link diagram}
  \label{gld}
\end{minipage}
\end{figure}

Two generalized link diagrams $D$ and $D'$ are called \emph{equivalent} (resp. \emph{regularly equivalent}) if $D$ is transformed into $D'$ by a finite sequence of ambient isotopies of $S$ relative to $\partial S$, and the three Reidemeister
moves $R_1$, $R_2$, and $R_3$ (resp. $R_2$ and $R_3$) shown in Figure \ref{Reidemeister}.
We write $D\sim D'$ (resp. $D\sim_r D'$) when $D$ is equivalent (resp. regularly equivalent) to $D'$.

\begin{figure}
 \centering
\includegraphics[width=30em]{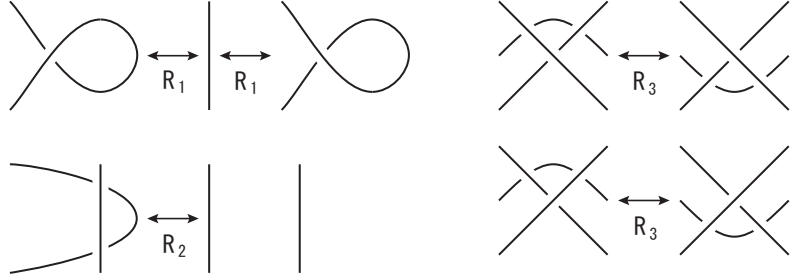}
  \caption{Reidemeister moves}
  \label{Reidemeister}
\end{figure}

Let $\gamma$ be a curve on $S$.
For each double point $p$ of $\gamma$, there is a neighborhood $U$ of $p$ such that $U\cap \gamma(I)$ consists of two arcs $J_1$ and $J_2$ intersecting at $p$, and $J_1$ is traversed first when we go along $\gamma$ from $\gamma(0)$.
Then we replace $p$ with a crossing with $J_1$ being overcrossing 
(see Figure \ref{doublept}).
Let $D_{\gamma}$ denote the generalized link diagram on $S$ obtained in this way. In other words, $D_{\gamma}$ is the projection diagram in the usual sense of the embedding $I\to S\times I, t\mapsto (\gamma(t),1-t)$ by the projection $S\times I\to S\times \{0\} \cong S, (x,t)\mapsto x$.

\begin{figure}
  \centering
  \includegraphics[width=25em]{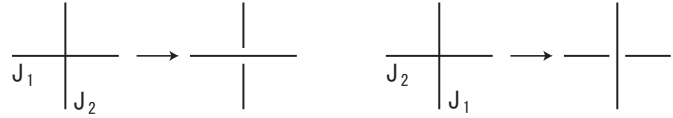}
  \caption{replacing double points with crossings}
  \label{doublept}
\end{figure}

The following fact is crucial in our argument:
\begin{thm}\label{thm1.1}
Suppose that  two curves $\gamma$ and $\gamma'$ on $S$ are homotopic (resp. regularly homotopic) relative to $\partial S$.
Then $D_{\gamma}\sim D_{\gamma'}$ (resp. $D_{\gamma}\sim_r D_{\gamma'}$).
\end{thm}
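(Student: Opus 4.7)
The plan is to lift everything to the thickened surface $S\times I$ and then invoke a standard Reidemeister theorem for generic projections. As already noted in the text, $D_\gamma$ is the projection of the embedding $\tilde\gamma:I\to S\times I$, $t\mapsto(\gamma(t),1-t)$. Given a homotopy $\{\gamma_s\}_{s\in I}$ rel $\partial S$ from $\gamma=\gamma_0$ to $\gamma'=\gamma_1$, the first observation I would make is that the lifts $\tilde\gamma_s$ form a smooth family of \emph{embeddings} of $I$ into $S\times I$, rel endpoints: a would-be self-intersection $(\gamma_s(t_1),1-t_1)=(\gamma_s(t_2),1-t_2)$ forces $t_1=t_2$. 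Hence $\{\tilde\gamma_s\}$ is an ambient isotopy of properly embedded arcs in the thickened surface.

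Next I would invoke the standard Reidemeister theorem for tangles in a thickened surface: two diagrams on $S$ arising as projections of ambient-isotopic embedded $1$-manifolds in $S\times I$ (rel boundary) are related by a finite sequence of ambient isotopies of $S$ rel $\partial S$ together with the moves $R_1$, $R_2$, $R_3$. The proof is a classical general-position argument: one perturbs the family $\{\tilde\gamma_s\}$ so that, for all but finitely many $s$, the image $\pi(\tilde\gamma_s)$ under $\pi:S\times I\to S$ is a generic diagram, while at each exceptional value of $s$ exactly one of the three Reidemeister events (monogon birth/death, tangency, triple point) occurs in a small disk. The over/under convention is automatically compatible: at any crossing with preimages $t_1<t_2$, ``first traversed'' means smaller $t$, hence larger $1-t$, hence overcrossing in the $S\times I$ picture. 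This yields $D_\gamma\sim D_{\gamma'}$.

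For the regular-homotopy statement the additional input is that during a regular homotopy $\partial\gamma_s/\partial t$ never vanishes. The codimension-one degenerations \emph{within} the space of immersions of $I$ into $S$ are exactly two: a pair of transverse double points created or destroyed through an intermediate self-tangency (giving $R_2$), and three arcs momentarily meeting at a triple point (giving $R_3$). A monogon birth/death event ($R_1$) requires the underlying curve to develop a local cusp, i.e.\ a vanishing of $\partial\gamma_s/\partial t$, which is excluded by the regular-homotopy hypothesis. Hence only $R_2$ and $R_3$ can occur in the sequence, giving $D_\gamma\sim_r D_{\gamma'}$.

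The step I expect to demand the most care is the general-position argument itself: one must justify that a generic (regular) homotopy really does decompose into finitely many local events of the types listed, and that each such event produces the stated Reidemeister modification with the correct choice of over- and under-strands. The over/under compatibility is reassuringly automatic from the $I$-coordinate interpretation of the lift, but the genericity analysis and the classification of codimension-one events for one-parameter families of curves in an oriented surface form the genuine technical core of the proof.
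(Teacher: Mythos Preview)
Your argument is correct but takes a different route from the paper. The paper's proof is considerably shorter: it quotes (from Goldman) the fact that two curves on $S$ that are homotopic (resp.\ regularly homotopic) rel $\partial S$ differ by a finite sequence of ambient isotopies and the elementary curve moves $\omega_1,\omega_2,\omega_3$ (resp.\ $\omega_2,\omega_3$), and then simply checks from the picture that applying $\omega_i$ to $\gamma$ induces the Reidemeister move $R_i$ on the ascending diagram $D_\gamma$. You instead pass to the thickened surface, use the nice observation that the lift $t\mapsto(\gamma_s(t),1-t)$ is automatically an embedding for every $s$, and then appeal to the Reidemeister theorem for isotopic tangles in $S\times I$; for the regular case you argue separately that an $R_1$ event forces $\partial_t\gamma_s=0$ and is therefore forbidden. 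The paper's approach is more economical because it offloads the entire genericity/classification analysis to a single citation and treats both the homotopy and regular-homotopy cases uniformly, whereas your approach spells out more of the mechanism (and makes the compatibility of the over/under convention with the $I$-coordinate explicit) at the cost of invoking a ``standard'' 3-dimensional Reidemeister theorem that itself needs a reference or the very general-position argument you flag at the end.
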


The \emph{Kauffman bracket} \cite{KA1} is extended to link diagrams on surfaces \cite{IK94}.
This extension is straightforward and applies to our generalized link diagrams also.
For the sake of definiteness, let us recall the construction.
Let $D$ be a generalized link diagram on $S$.
We can split $D$ at each crossing in two ways. 
We will distinguish these splittings as a type A splitting and a type B splitting, respectively (see Figures \ref{splitting1} and \ref{splitting2}, according to the orientation of $S$).
A \emph{state} of $D$ is a choice of splitting type for each crossing of $D$.
For a state $s$ of $D$, let $D(s)$ be the compact 1-submanifold of $S$ obtained by splitting $D$ by $s$.
If $D$ has $n$ crossings, there are $2^n$ states of $D$.

\begin{figure}
\begin{minipage}{0.45\hsize}
  \centering
  \includegraphics[width=12em]{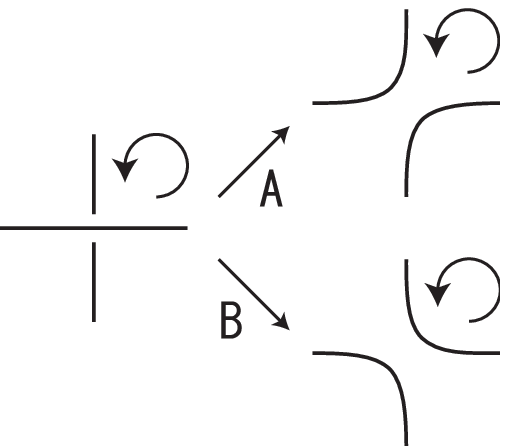}
  \caption{splitting with an orientation}
  \label{splitting1}
\end{minipage}
\hspace{1cm}
\begin{minipage}{0.45\hsize}
  \centering
  \includegraphics[width=12em]{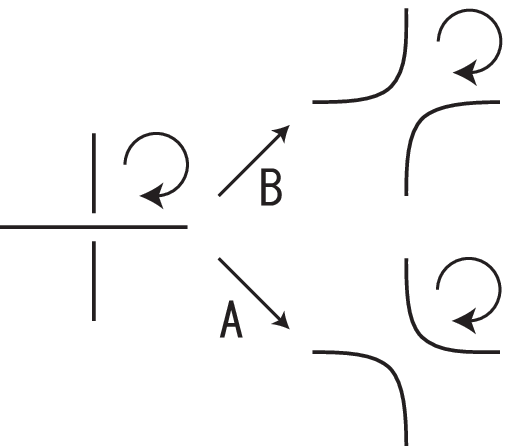}
  \caption{splitting with the other orientation}
  \label{splitting2}
\end{minipage}
\end{figure}

To each state $s$ of $D$, we assign the following three numbers:
\begin{align*}
& \text{$\alpha(s):=$ the number of type A splittings}, \\
& \text{$\beta(s):=$ the number of type B splittings}, \\
& \text{$\mu(s):=$ the number of connected components of $D(s)$.}
\end{align*}
Then we define the \emph{bracket polynomial} of $D$ by
$$
\langle D\rangle:=
  \sum_{s}A^{\alpha(s)-\beta(s)}(-A^2-A^{-2})^{\mu(s)-1},
$$
where $s$ runs over all states of $D$.

\begin{figure}
  \centering
  \includegraphics[height=2cm]{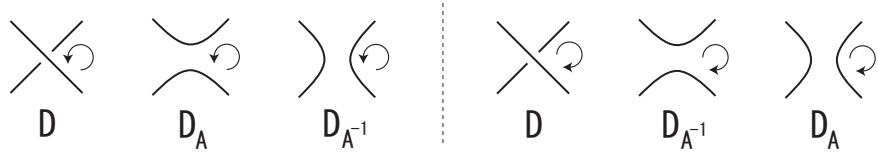}
  \caption{three diagrams}
  \label{split}
\end{figure}

A basic property of the bracket polynomial is the following skein relation, whose proof is the same as that of the classical case \cite{KA1}.

\begin{lem}
\label{lem1.2}
Let $D$ be a generalized link diagram on $S$.
\begin{enumerate}
\item
Pick a crossing of $D$ and consider the two splittings of it as shown in Figure \ref{split}.
Then
\[
\langle D\rangle=A\langle D_A\rangle+A^{-1}\langle D_{A^{-1}}\rangle.
\]
\item
Let $T$ be a generalized link diagram which is connected and has no crossing.
\begin{enumerate}
\item
We have $\langle T \rangle=1$.
\item
If $D$ and $T$ are disjoint, then
$\langle D\sqcup T \rangle=(-A^2-A^{-2})\langle D\rangle$.
\end{enumerate}
\end{enumerate}
\end{lem}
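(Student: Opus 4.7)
The proof is essentially combinatorial bookkeeping, following Kauffman's original argument for the classical bracket. The definition of $\langle D\rangle$ depends on $D$ only through the triple $(\alpha,\beta,\mu)$ attached to each state, and of these only $\mu$ uses the ambient surface at all (and only to count connected components). So nothing in the surface setting obstructs the classical manipulation.

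For part (1), I would fix the distinguished crossing $c$ of $D$ and partition the states of $D$ into those that split $c$ as type A and those that split $c$ as type B. Restriction to the remaining crossings gives a bijection between the first class and the states of $D_A$, and a bijection between the second class and the states of $D_{A^{-1}}$. If $s$ splits $c$ as type A and $s'$ is the corresponding state of $D_A$, then $\alpha(s)=\alpha(s')+1$, $\beta(s)=\beta(s')$, and the 1-submanifold $D(s)\subset S$ coincides with $D_A(s')$, so $\mu(s)=\mu(s')$; the type B case is symmetric. Plugging in and factoring out $A$ and $A^{-1}$ respectively yields
\[
\langle D\rangle=A\sum_{s'}A^{\alpha(s')-\beta(s')}(-A^2-A^{-2})^{\mu(s')-1}+A^{-1}\sum_{s''}A^{\alpha(s'')-\beta(s'')}(-A^2-A^{-2})^{\mu(s'')-1},
\]
where $s'$ and $s''$ run over the states of $D_A$ and $D_{A^{-1}}$; this is the claimed skein relation.

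For part (2)(a), $T$ has no crossings so there is a unique (empty) state $s$ with $\alpha(s)=\beta(s)=0$ and $D(s)=T$; by assumption $T$ is connected, hence $\mu(s)=1$ and the state sum collapses to $A^0(-A^2-A^{-2})^0=1$. For part (2)(b), the crossings of $D\sqcup T$ are exactly the crossings of $D$, so states correspond bijectively: to $s'$ on $D$ one associates the state $s$ on $D\sqcup T$ with the same choices. Then $\alpha(s)=\alpha(s')$, $\beta(s)=\beta(s')$, and $(D\sqcup T)(s)=D(s')\sqcup T$, so that $\mu(s)=\mu(s')+1$. Pulling the extra factor of $(-A^2-A^{-2})$ out of the state sum gives the claim.

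There is no real obstacle here; the only point one needs to be careful about is that the connected-component count is computed inside $S$ (not inside some abstract graph), and that the bijections on states are compatible with the equalities $D(s)=D_A(s')$ and $(D\sqcup T)(s)=D(s')\sqcup T$. Both compatibilities are immediate from the fact that $D_A$, $D_{A^{-1}}$ and the two summands of $D\sqcup T$ are defined by performing, or not performing, precisely the relevant local splittings, so the resulting 1-submanifolds of $S$ agree on the nose.
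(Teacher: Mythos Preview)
Your argument is correct and is exactly the classical state-sum bookkeeping due to Kauffman; the paper itself omits the proof, simply noting that it ``is the same as that of the classical case \cite{KA1}.'' Nothing further is needed.
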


Assume that a generalized link diagram $D=f(X)$ is \emph{oriented}.
That is, $X$ is oriented and $D$ inherits this orientation.
For instance, if $\gamma$ is a curve on $S$, then $D_{\gamma}$ can be oriented from the natural orientation of $I$.
Let $w(D)$ denote the \emph{writhe number} of $D$. That is,
$$w(D):=\sum_{p}\varepsilon_p,$$
where $p$ runs over all crossings of $D$ and $\varepsilon_p\in \{ \pm 1\}$ is the sign of the crossing at $p$ (see Figure \ref{sign}).
Then we define the \emph{Kauffman-Jones polynomial} of $D$ by
$$\LL_{D}(A):=(-A)^{-3w(D)}\langle D\rangle.$$

\begin{figure}
  \centering
  \includegraphics[height=2cm]{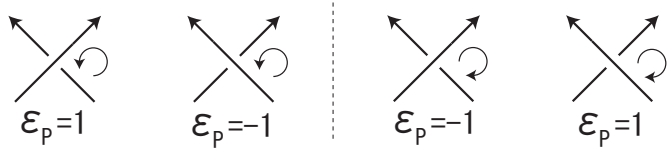}
  \caption{signs of crossings}
  \label{sign}
\end{figure}

The following result is an analogy of the result for ordinary link diagrams given by Kauffman \cite{KA1},
where Lemma \ref{lem1.2} played a central role.
His argument can also be applied to the case of generalized link diagrams, so we omit the proof.

\begin{thm}\label{thm1.3}
Let $D$ and $D'$ be generalized link diagrams on $S$.
\begin{enumerate}
\item
If $D$ and $D'$ are regularly equivalent,
$\langle D\rangle=\langle D'\rangle$.
\item
Assume further that $D$ and $D'$ are oriented.
If $D$ and $D'$ are equivalent, $\LL_D(A)=\LL_{D'}(A)$.
\end{enumerate}
\end{thm}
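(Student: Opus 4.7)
The plan is to mirror Kauffman's original argument \cite{KA1}, verifying invariance locally at each Reidemeister move and checking that the correction factor in $\LL_D$ cancels the anomaly under $R_1$. Since generalized link diagrams differ from ordinary link diagrams only in that some components of the underlying $1$-manifold $X$ are arcs rather than circles, and since all Reidemeister moves take place inside a small disk in the interior of $S$ disjoint from $\partial S$, every computation is local and goes through verbatim once we confirm the crossings and splittings in the disk look identical to the classical picture.

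First I would handle part (1). Ambient isotopy invariance of $\langle D\rangle$ is immediate from the definition, since states, splittings, and component counts are preserved. For the $R_2$ move, I would pick the two crossings involved and expand both sides via the skein relation in Lemma \ref{lem1.2}(1). The four resulting states on the two-crossing side split into terms that, after using Lemma \ref{lem1.2}(2) to absorb the bubble component with the factor $(-A^2-A^{-2})$, collapse to exactly the unknotted arcs on the other side; the algebra $A\cdot A^{-1}+A^{-1}\cdot A+A^{2}(-A^2-A^{-2})+A^{-2}\cdot 0 = 1$ (up to the right bookkeeping) yields the equality. For $R_3$, I would apply the skein relation to the crossing whose strand is slid over, reducing both diagrams to pairs that differ only by $R_2$ moves already justified.

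For part (2), given regular-equivalence invariance from (1), it remains to check the $R_1$ move. Here the skein relation gives $\langle D_{R_1}\rangle = A\langle D_{\text{arc}}\rangle + A^{-1}(-A^2-A^{-2})\langle D_{\text{arc}}\rangle = -A^{-3}\langle D_{\text{arc}}\rangle$ for a positive kink, and $-A^{3}\langle D_{\text{arc}}\rangle$ for the other sign. Meanwhile, the sign of the new crossing (Figure \ref{sign}) contributes $\pm 1$ to $w(D)$, so the prefactor $(-A)^{-3w(D)}$ in $\LL_D(A)$ picks up $(-A)^{\mp 3}$, exactly cancelling the bracket's change. Ambient isotopy invariance of $\LL_D$ follows from that of $\langle D\rangle$ and the obvious invariance of $w(D)$.

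The only step requiring genuine thought is ensuring that the local pictures for $R_1$, $R_2$, $R_3$ make sense for generalized link diagrams with arcs, specifically that an $R_1$ kink or $R_2$ bigon can equally well appear on an arc component as on a circle component, and that the orientation used to define $w(D)$ behaves coherently under these moves. Since $f|_{\partial X}$ is injective and $f^{-1}(\partial S)=\partial X$, the endpoints of arcs stay fixed on $\partial S$ during any Reidemeister move, so the local calculation never interacts with the arc structure and the classical proof applies without modification. I would therefore present the argument by simply reducing each item to the corresponding classical lemma, noting this boundary-disjointness as the only new observation.
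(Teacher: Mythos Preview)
Your proposal is correct and takes essentially the same approach as the paper: the paper itself omits the proof entirely, stating only that Kauffman's original argument \cite{KA1} ``can also be applied to the case of generalized link diagrams,'' and your outline is precisely a sketch of that classical argument together with the observation that the Reidemeister moves occur in a disk disjoint from $\partial S$, so the presence of arc components changes nothing. One minor caution: your labeling of which kink sign yields $-A^{-3}$ versus $-A^{3}$ depends on the orientation convention in Figures~\ref{splitting1}--\ref{sign}, so when writing it out you should match the sign of the crossing to the correct factor rather than asserting which is ``positive'' a priori; since you cover both cases and verify the cancellation with $(-A)^{-3w(D)}$, the argument goes through either way.
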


To simplify notation, we denote
$\LL_{\gamma}(A):=\LL_{D_{\gamma}}(A)$ for a curve $\gamma$.
Combining Theorems \ref{thm1.1} and \ref{thm1.3}, we obtain
\begin{thm}\label{thm1.4}
Let $\gamma$ and $\gamma'$ be curves on $S$.
\begin{enumerate}
\item
If $\gamma$ and $\gamma'$ are regularly homotopic relative to $\partial S$, then $\langle D_{\gamma} \rangle=\langle D_{\gamma'} \rangle$.
\item
If $\gamma$ and $\gamma'$ are homotopic relative to $\partial S$, then
$\LL_{\gamma}(A)=\LL_{\gamma'}(A)$.
\end{enumerate}
\end{thm}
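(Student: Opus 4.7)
The plan is that Theorem \ref{thm1.4} is essentially a formal consequence of the two results already in hand, so the proof amounts to chaining Theorems \ref{thm1.1} and \ref{thm1.3}. I would open with a single sentence to this effect, then carry out each of the two parts in turn.

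For part (1), I would begin by invoking the regular-homotopy half of Theorem \ref{thm1.1}: since $\gamma$ and $\gamma'$ are regularly homotopic relative to $\partial S$, the associated generalized link diagrams satisfy $D_{\gamma} \sim_r D_{\gamma'}$. Then an immediate application of Theorem \ref{thm1.3}(1) gives $\langle D_{\gamma}\rangle = \langle D_{\gamma'}\rangle$, which is the definition of the bracket in this notation. No further calculation is required.

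For part (2), the argument is parallel but requires a brief remark about orientations. First I would apply the homotopy half of Theorem \ref{thm1.1} to conclude $D_{\gamma} \sim D_{\gamma'}$. Before invoking Theorem \ref{thm1.3}(2) I would note that $D_{\gamma}$ and $D_{\gamma'}$ are canonically oriented, since the immersions $\gamma, \gamma' : I \to S$ transport the standard orientation of $I$ to $D_{\gamma}$ and $D_{\gamma'}$, as was already remarked in the paragraph defining $\LL_{D}(A)$. One also needs that this orientation is preserved under ambient isotopy rel $\partial S$ and under the Reidemeister moves $R_1, R_2, R_3$; this is built into how these moves act on oriented diagrams, so it is essentially automatic. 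With the orientations in place, Theorem \ref{thm1.3}(2) yields $\LL_{D_{\gamma}}(A) = \LL_{D_{\gamma'}}(A)$, i.e.\ $\LL_{\gamma}(A) = \LL_{\gamma'}(A)$ by the abbreviation $\LL_{\gamma}(A) := \LL_{D_{\gamma}}(A)$.

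There is no genuine obstacle here; the substantive work lies in Theorem \ref{thm1.1} (homotopy of curves $\Rightarrow$ equivalence of the associated diagrams, a claim requiring careful handling of crossing choices at double points and Reidemeister moves) and in Theorem \ref{thm1.3} (the Kauffman-Jones invariance argument adapted from \cite{KA1}). The only point in the present proof that merits explicit mention is the compatibility of the canonical orientation of $D_{\gamma}$ with the equivalence relation $\sim$, and this deserves at most a one-line parenthetical remark.
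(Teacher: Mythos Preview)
Your proposal is correct and matches the paper's approach exactly: the paper simply states that Theorem~\ref{thm1.4} follows by combining Theorems~\ref{thm1.1} and~\ref{thm1.3}, with no further argument. Your added remark about the canonical orientation on $D_{\gamma}$ is a harmless clarification of a point the paper leaves implicit.
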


For a Laurent polynomial $f(A)\in \mathbb{Z}[A,A^{-1}]$, the \emph{span} of $f$, denoted by $\spn f$, is defined to be the difference of the maximal and the minimal degrees of $f$.
Note that $\spn \langle D_{\gamma}\rangle=\spn \LL_{\gamma}(A)$ for any curve $\gamma$.
We denote by $d(\gamma)$ the number of double points of a curve $\gamma$.
Then we have the following estimate for $d(\gamma)$, which is analogous to \cite{MU1} and \cite{Th87}.

\begin{thm}\label{thm1.5} 
For a curve $\gamma$ on $S$, it holds that
\begin{equation}\label{estim}
\spn \langle D_{\gamma}\rangle\leq 4d(\gamma).
\end{equation}
\end{thm}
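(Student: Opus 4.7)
The plan is to mimic Kauffman's original argument \cite{KA1} for the span of the bracket polynomial of a classical link diagram, making the necessary topological adjustments for our setting.

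Set $n := d(\gamma)$. From the state sum, each state $s$ contributes monomials in $A$ of degrees in the interval $[\alpha(s)-\beta(s)-2(\mu(s)-1),\, \alpha(s)-\beta(s)+2(\mu(s)-1)]$. A local analysis at a crossing shows that if two states $s,s'$ differ at exactly one crossing, then $|\mu(s)-\mu(s')|\leq 1$: flipping the smoothing at that crossing either merges two components of the resolved 1-manifold, splits one into two, or (only on a non-planar ambient surface) leaves the count unchanged. Consequently, flipping a single type-A splitting to type-B changes both $\alpha(s)-\beta(s)+2(\mu(s)-1)$ and $\alpha(s)-\beta(s)-2(\mu(s)-1)$ by a non-positive amount. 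Hence the upper expression is maximized at the all-A state $s_A$, with value $n+2(\mu(s_A)-1)$, and the lower expression is minimized at the all-B state $s_B$, with value $-n-2(\mu(s_B)-1)$. Subtracting,
\[
\spn\langle D_\gamma\rangle \leq 2n+2\mu(s_A)+2\mu(s_B)-4.
\]

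It remains to establish the topological inequality $\mu(s_A)+\mu(s_B)\leq n+2$. Here the essential point is that $D_\gamma$ is the diagram of an arc rather than a link. Regarding $D_\gamma$ as a graph with $n+2$ vertices (the $n$ crossings and the two endpoints $\gamma(0),\gamma(1)$) and $2n+1$ edges, every splitting replaces each 4-valent crossing by two 2-valent vertices, so $D(s)$ has Euler characteristic $(2n+2)-(2n+1)=1$. Being a compact 1-manifold with exactly two boundary points, $D(s)$ must consist of a single arc joining $\gamma(0)$ to $\gamma(1)$ together with $c(s):=\mu(s)-1$ circle components, so the required bound is equivalent to $c(s_A)+c(s_B)\leq n$.

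The main obstacle is establishing this last inequality on a general ambient surface: the classical planar proof uses the Euler identity $F=n+2$ for a 4-valent graph in $S^2$, which is unavailable here. I would instead work inside a regular neighborhood $N$ of $\gamma(I)$ in $S$, which, as a thickening of the graph underlying $D_\gamma$, satisfies $\chi(N)=1-n$. One can then relate $\mu(s_A)$ and $\mu(s_B)$ to the number of boundary circles of the state surfaces (obtained from $D(s_A)$ and $D(s_B)$ by gluing back bands at the crossings in the pattern of the opposite splitting) and apply additivity of the Euler characteristic, exploiting the fact that both state surfaces deformation retract onto $N$; an alternative route is an induction on $n$ via the skein relation of Lemma~\ref{lem1.2}, after suitably enlarging the inductive hypothesis to cover generalized link diagrams consisting of one arc and a disjoint collection of circles. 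Substituting $\mu(s_A)+\mu(s_B)\leq n+2$ into the previous estimate then gives $\spn\langle D_\gamma\rangle \leq 2n+2(n+2)-4=4n=4d(\gamma)$, as required.
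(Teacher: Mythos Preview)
Your reduction to the inequality $\mu(s_A)+\mu(s_B)\le n+2$ is exactly what the paper does, and your Kauffman-style max/min degree argument matches the paper's Proposition~\ref{prop4.1} line for line. The paper then proves the key inequality (Lemma~\ref{lem:d+2}) by a short induction on the number of crossings: given a connected diagram with at most one arc component, pick any crossing; at least one of its two smoothings is again connected with at most one arc component, say the $A$-smoothing $D'$, and then $\mu(s_A)=\mu(s'_A)$ while $\mu(s_B)\le\mu(s'_B)+1$, so the inductive hypothesis for $D'$ gives the bound for $D$. You mention this inductive route as your ``alternative'', so in spirit your proposal and the paper agree.

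Where your write-up has a gap is in the primary Euler-characteristic argument. The ``state surface'' you describe---thickening $D(s_A)$ and reattaching bands at each crossing in the $B$-pattern---is nothing other than the regular neighborhood $N$ itself (and likewise for $s_B$), so the two constructions coincide and you cannot read off $\mu(s_A)$ and $\mu(s_B)$ separately from their boundaries. On a general surface the boundary circles of $N$ do \emph{not} biject with the components of $D(s_A)\sqcup D(s_B)$; that correspondence is a planar phenomenon coming from the checkerboard colouring. A correct Euler-characteristic proof does exist and in fact appears later in the paper (the surface $N'$ of \S4, built from $N$ by inserting half-twisted bands at each crossing so that $\partial N'$ genuinely decomposes into $A$- and $B$-labelled pieces with $r'=\mu(s_A)+\mu(s_B)-1$ components; then $\chi(N')=1-n$ and $\chi(N')\le 2-r'$ give the bound). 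But that construction is more delicate than your sketch suggests, and without it the Euler-characteristic route is incomplete. The cleanest fix is simply to carry out the induction you already flagged.
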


We define the minimum self-intersection number $c(\gamma)$ of
a curve $\gamma$ by
$$ c(\gamma):=\min\,\{\,d(\gamma')\,|\,
  \text{$\gamma'$ is a curve on $S$ homotopic to $\gamma$ relative to $\partial S$}\}.$$

\begin{cor}\label{cor1.6} 
For any curve $\gamma$ on $S$, it holds that
$$\frac{\spn \langle D_{\gamma}\rangle}{4}\leq  c(\gamma).$$
\end{cor}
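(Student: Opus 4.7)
The plan is to combine Theorem \ref{thm1.5} with the homotopy invariance in Theorem \ref{thm1.4}(2), making essential use of the remark that $\spn \langle D_{\gamma}\rangle = \spn \LL_{\gamma}(A)$ for every curve $\gamma$.

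First I would pick any curve $\gamma'$ on $S$ homotopic to $\gamma$ relative to $\partial S$. By Theorem \ref{thm1.4}(2) we have $\LL_{\gamma}(A) = \LL_{\gamma'}(A)$, and therefore $\spn \LL_{\gamma}(A) = \spn \LL_{\gamma'}(A)$. Since $\LL_{D}(A)$ differs from $\langle D\rangle$ only by the monomial factor $(-A)^{-3w(D)}$, which shifts the polynomial but does not alter its span, the identity $\spn \langle D_{\gamma}\rangle = \spn \LL_{\gamma}(A)$ noted just before Theorem \ref{thm1.5} gives
\[
\spn \langle D_{\gamma}\rangle = \spn \langle D_{\gamma'}\rangle.
\]

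Next I would apply Theorem \ref{thm1.5} to $\gamma'$ to get $\spn \langle D_{\gamma'}\rangle \leq 4 d(\gamma')$. Combining with the previous display yields
\[
\frac{\spn \langle D_{\gamma}\rangle}{4} \leq d(\gamma'),
\]
which holds for every curve $\gamma'$ homotopic to $\gamma$ relative to $\partial S$. Taking the infimum (which is realized, since $d(\gamma')$ takes nonnegative integer values) over all such $\gamma'$ gives $\spn \langle D_{\gamma}\rangle/4 \leq c(\gamma)$, as desired.

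There is no real obstacle here: the corollary is a packaging of Theorem \ref{thm1.5} with the homotopy invariance of the span. The only subtle point to state carefully is why one may pass from regular-homotopy invariance of $\langle D_{\gamma}\rangle$ (Theorem \ref{thm1.4}(1)) to the stronger homotopy invariance needed for $c(\gamma)$; this is handled by working with $\LL_{\gamma}(A)$ in the middle step and then reverting to $\langle D_{\gamma}\rangle$ via the shift-invariance of the span under multiplication by $(-A)^{-3w(D)}$.
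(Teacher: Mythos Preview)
Your proof is correct and matches the argument implicit in the paper: the corollary is obtained by combining the homotopy invariance of $\LL_{\gamma}(A)$ from Theorem~\ref{thm1.4}(2), the observation $\spn \langle D_{\gamma}\rangle=\spn \LL_{\gamma}(A)$, and Theorem~\ref{thm1.5} applied to an arbitrary curve $\gamma'$ homotopic to $\gamma$. Your remark about why one must pass through $\LL_{\gamma}(A)$ rather than rely on Theorem~\ref{thm1.4}(1) is exactly the point.
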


We give examples of using Corollary \ref{cor1.6}
for estimating $c(\gamma)$.

\begin{figure}
\begin{minipage}{0.45\hsize}
  \centering
  \includegraphics[width=8em]{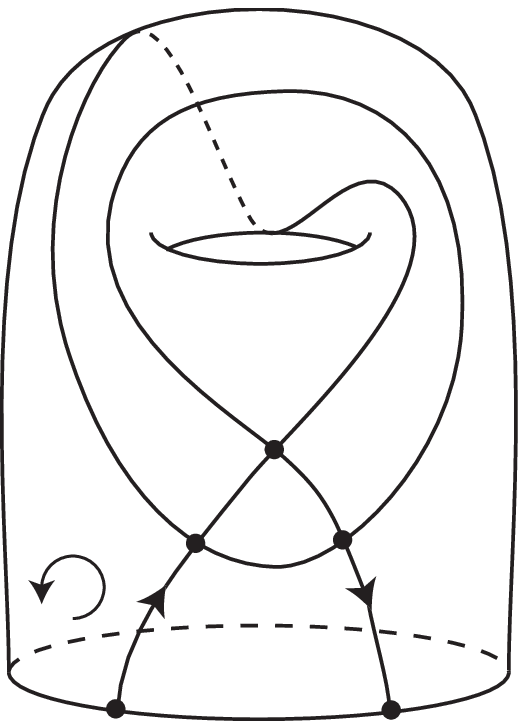}
  \caption{a curve $\gamma_1$ on a punctured torus}
  \label{fig2}
\end{minipage}
\hspace{1cm}
\begin{minipage}{0.45\hsize}
  \centering
  \includegraphics[width=8em]{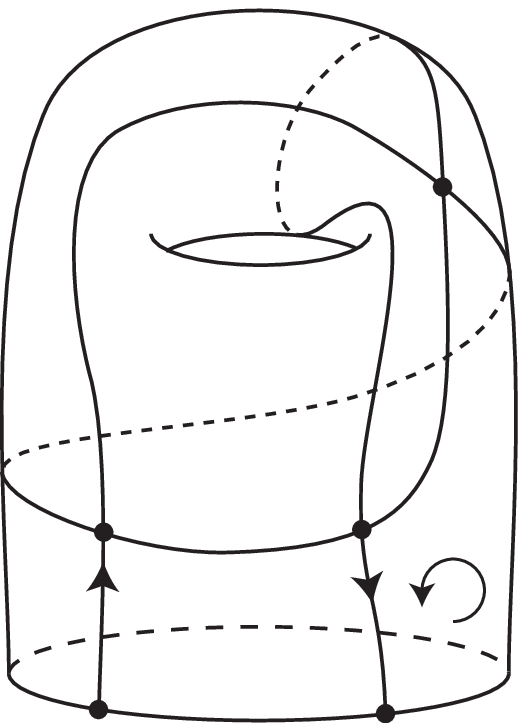}
   \caption{a curve $\gamma_2$ on a punctured torus}
  \label{fig3}
\end{minipage}
\end{figure}

\begin{example}
Let $\gamma_1$ be the curve shown in Figure \ref{fig2}.
The bracket polynomial of $\gamma_1$ is
$$\langle D_{\gamma_1}\rangle=A-A^{-3}-A^{-5}.$$
We see that
$\spn \langle D_{\gamma_1}\rangle=6$
and $6/4\leq c(\gamma_1)$.
Hence we obtain $2\leq c(\gamma_1)\leq 3$.
\end{example}

\begin{example}
Let $\gamma_2$ be the curve shown in Figure \ref{fig3}.
The bracket polynomial of $\gamma_2$ is
$$\langle D_{\gamma_2} \rangle=-A^5+A+A^{-1}-A^{-3}-A^{-5}.$$
Since
$\spn \langle D_{\gamma_2}\rangle=10$,
we have $10/4\leq c(\gamma_2)$.
Therefore, $c(\gamma_2)=3$.
\end{example} 

\section{Proofs of Theorems \ref{thm1.1} and \ref{thm1.5}}
In this section, we prove Theorems \ref{thm1.1} and \ref{thm1.5}.

\begin{proof}[Proof of Theorem \ref{thm1.1}]
If two curves $\gamma$ and $\gamma'$ are homotopic relative to $\partial S$, 
then $\gamma$ is transformed into $\gamma'$ by using a finite sequence of ambient isotopies of $S$ relative to $\partial S$ and the three local moves $\omega_1$, $\omega_2$, $\omega_3$, shown in Figure \ref{Reidemeister-omega}. See e.g., \cite{Go86} Lemma 5.6.

\begin{figure}
  \centering
  \includegraphics[width=30em]{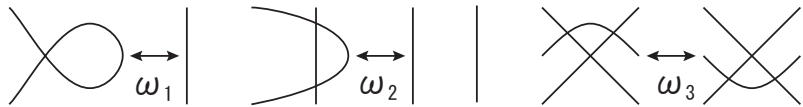}
  \caption{Reidemeister moves of a curve $\gamma$}
  \label{Reidemeister-omega}
\end{figure}

\begin{figure}
  \centering
  \includegraphics[height=3cm]{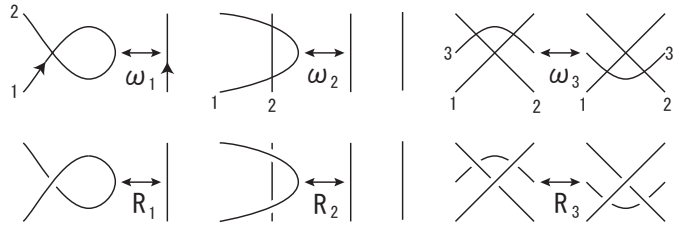}
  \caption{Reidemeister moves of $\gamma$ and $D_{\gamma}$}
  \label{Reidemeister-omegaR}
\end{figure}

It is easily seen that if $\gamma$ is transformed into $\gamma'$ by 
$\omega_i$ ($i=1,2,3$), then $D_{\gamma}$ can be transformed into $D_{\gamma'}$ 
by $R_i$ ($i=1,2,3$) respectively
(see Figure \ref{Reidemeister-omegaR}).
This completes the proof.
\end{proof}

Next, we prove Theorem \ref{thm1.5}.
Recall that a generalized link diagram $D$ has the form
$D=f(X)$ for a generic immersion $f: X\to S$, endowed with a choice of crossing to each double point.
We say that $D$ is \emph{connected}
if it is connected as a subset of $S$.
Let $d(D)$ be the number of crossings of $D$.

Let us consider the following condition for a generalized link diagram
$D=f(X)$:
\begin{equation}\label{Xform}
\text{the number of connected components of $X$
homeomorphic to $I$ is at most one.}
\end{equation}
Since $D_{\gamma}$ is connected for any curve $\gamma$, Theorem \ref{thm1.5} is a special case of the following:

\begin{prop}\label{prop4.1} 
Let $D$ be a connected generalized link diagram
satisfying condition $(\ref{Xform})$.
Then it holds that
$$\spn  \langle D \rangle \leq 4d(D).$$
\end{prop}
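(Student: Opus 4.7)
The plan is to adapt the classical Kauffman--Murasugi--Thistlethwaite span argument to our setting. For a generalized link diagram $E$, let $\mu_A(E)$ and $\mu_B(E)$ denote the numbers of connected components of the all-A and all-B splittings of $E$. The proof has two steps: (a) a state-sum bound $\spn \langle D \rangle \leq 2d(D) + 2(\mu_A(D) + \mu_B(D)) - 4$, and (b) the combinatorial bound $\mu_A(D) + \mu_B(D) \leq d(D) + 2$.

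For step (a), I would observe that for each state $s$ the summand $A^{\alpha(s) - \beta(s)}(-A^2 - A^{-2})^{\mu(s) - 1}$ has maximum and minimum $A$-degrees $d^{\pm}(s) = \alpha(s) - \beta(s) \pm 2(\mu(s) - 1)$. Switching one splitting from type A to type B decreases $\alpha - \beta$ by $2$ and shifts $\mu$ by some $\delta \in \{-1, 0, +1\}$, so $d^+$ and $d^-$ change by $-2 + 2\delta$ and $-2 - 2\delta$ respectively, both non-positive. Hence $d^+$ attains its maximum at the all-A state $s_A$ and $d^-$ its minimum at $s_B$, yielding the bound.

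Step (b) I would prove by induction on $n := d(D)$. The base $n = 0$ is immediate: a connected crossingless $D$ is an arc or a circle, so $\mu_A(D) = \mu_B(D) = 1$. For the inductive step, pick any crossing $c$ of $D$ and form $D_A, D_B$ by A- and B-splitting $c$; each has $n - 1$ crossings, and a short case analysis shows both still satisfy condition~(\ref{Xform}). The core claim is that at least one of $D_A, D_B$ is connected. To prove it, take a small disk $N$ around $c$ with $D \cap \partial N = \{e_1, e_2, e_3, e_4\}$ labeled in cyclic order; the components of $D \setminus N$ induce a partition $\pi$ of $\{e_1, \ldots, e_4\}$ whose parts have size at most $2$. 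Writing $M_1 = \{(e_1, e_2), (e_3, e_4)\}$, $M_2 = \{(e_1, e_4), (e_2, e_3)\}$, and $M_3 = \{(e_1, e_3), (e_2, e_4)\}$ for the three perfect matchings, corresponding to the A-splitting, the B-splitting, and the original crossing at $c$ respectively, the diagrams $D_A, D_B, D$ are connected if and only if the joins $\pi \vee M_1, \pi \vee M_2, \pi \vee M_3$ are trivial partitions. A direct enumeration of admissible $\pi$'s verifies that triviality of $\pi \vee M_3$ forces triviality of at least one of $\pi \vee M_1, \pi \vee M_2$.

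Without loss of generality $D_A$ is connected, and the inductive hypothesis gives $\mu_A(D_A) + \mu_B(D_A) \leq n + 1$. Because A-splitting every crossing of $D$ agrees with A-splitting $c$ first and then the remaining crossings, one has $\mu_A(D) = \mu_A(D_A)$; and since the all-B splittings of $D$ and $D_A$ differ only at $c$, $|\mu_B(D) - \mu_B(D_A)| \leq 1$. Combining, $\mu_A(D) + \mu_B(D) \leq n + 2$, completing the induction. Together with step (a), this gives $\spn \langle D \rangle \leq 4 d(D)$. The main obstacle is the combinatorial claim in step (b)---that some splitting of any crossing of a connected diagram preserves connectedness---which relies crucially on the cyclic arrangement of the four edges at the $4$-valent crossing vertex; the rest of the argument (the state-sum manipulation, and the preservation of (\ref{Xform}) under splittings) is routine.
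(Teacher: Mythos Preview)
Your overall strategy---the state-sum bound in step (a) followed by the inductive inequality $\mu_A+\mu_B\le d(D)+2$ in step (b)---is exactly the paper's approach, and step (a) is fine. The gap is in your justification of the core claim in step (b), that at least one of $D_A,D_B$ is connected. Two of your assertions there are false. First, $\pi$ need not have parts of size at most~$2$: for the standard trefoil, removing a disc about any crossing leaves $D\setminus N$ connected, so $\pi$ is the one-block partition. Second, and more seriously, the equivalence ``$D$ connected $\iff \pi\vee M_3$ trivial'' fails: the two strands of $D\cap N$ meet at the crossing point, so $D\cap N$ is itself connected, and $D$ is connected as soon as every component of $D\setminus N$ meets $\partial N$---a strictly weaker condition. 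Concretely, take $X=I\sqcup S^1$ on an annulus with the core circle crossing a radial arc once; then $\pi=\{e_1,e_3\}\{e_2\}\{e_4\}$ (with $e_1,e_3$ the circle-strand ends), $\pi\vee M_3=\{e_1e_3\}\{e_2e_4\}$ is non-trivial, yet $D$ is connected and satisfies~(\ref{Xform}).

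Your enumeration is therefore carried out under the wrong hypothesis, and as written your argument never uses condition~(\ref{Xform}) in the connectedness step---but that condition is exactly what is needed. A direct check shows the only $\pi$ for which \emph{both} $\pi\vee M_1$ and $\pi\vee M_2$ are non-trivial is the all-singletons partition $\{e_1\}\{e_2\}\{e_3\}\{e_4\}$; condition~(\ref{Xform}) excludes this, since if $X$ has at most one interval component then $X$ cut at the two preimages of $c$ has at most three pieces meeting $\{e_1,\dots,e_4\}$, so $\pi$ has at most three blocks. Without~(\ref{Xform}) the claim is genuinely false (two arcs crossing once form an ``X'' whose A- and B-splittings are both disconnected). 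Once you replace the $M_3$-criterion by this use of~(\ref{Xform}), the remainder of your argument goes through.
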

\begin{proof}
The bracket polynomial of $D$ is written as
$$\langle D\rangle=
  \sum_{s} \langle D|s \rangle \delta^{\mu(s)-1},$$
where $s$ runs over all states of $D$ and we set $\langle D|s\rangle:=A^{\alpha(s)-\beta(s)}$, $\delta:=-A^2-A^{-2}$.

Let $s$ be a state of $D$ having a type A splitting, and let $s'$ denote the state of $D$ obtained from $s$ by replacing the type A splitting
with a type B splitting. Then we have
$$\langle D|s'\rangle=\langle D|s\rangle A^{-2},\ \ 
\mu(s')\leq \mu(s)+1,\ \ \mu(s)\leq \mu(s')+1.$$
Hence we have
\begin{align*}
\max\ \deg\ \langle D|s'\rangle\delta^{\mu(s')-1} 
&\leq \max\ \deg\ \langle D|s\rangle\delta^{\mu(s)-1}, \\
\min\ \deg\ \langle D|s'\rangle\delta^{\mu(s')-1} 
&\leq \min\ \deg\ \langle D|s\rangle\delta^{\mu(s)-1}.
\end{align*}
Let $s_A$ (resp. $s_B$) denote the state of $D$ whose splitting at each crossing is of type A (resp. of type B).
Then we have
\begin{align*}
\max\ \deg\ \langle D\rangle
&\leq \max\ \deg\ \langle D|s_{A}\rangle\delta^{\mu(s_A)-1}
=d(D)+2(\mu(s_A)-1), \\
\min\ \deg\ \langle D\rangle
&\geq \min\ \deg\ \langle D|s_{B}\rangle\delta^{\mu(s_B)-1}
=-d(D)-2(\mu(s_B)-1).
\end{align*}
From these inequalities, we have
$$\spn \langle D\rangle
\leq 2d(D)+2(\mu(s_A)+\mu(s_B)-2).$$

\begin{lem}
\label{lem:d+2}
We have $\mu(s_A)+\mu(s_B)\leq d(D)+2$.
\end{lem}

\begin{proof}
If $d(D)=0$, the inequality is obvious.
Let $d(D)>0$ and
choose a crossing of $D$ and consider the two splittings of it as shown in Figure \ref{split}.
Then, at least one of them is connected and satisfies condition (\ref{Xform}) by virtue of the assumption (\ref{Xform}) on $D$.
Let $D'$ be such a generalized link diagram and assume that
$D'$ is obtained from the type A splitting (the other case is treated similarly).
Let $s_A'$ and $s_B'$ be the states of $D'$ defined by the same way as we introduce $s_A$ and $s_B$ to $D$.
Then $\mu(s_A)=\mu(s_A')$ and $\mu(s_B)\le \mu(s_B')+1$,
hence $\mu(s_A)+\mu(s_B)\le \mu(s'_A)+\mu(s'_B)+1$.
Then the assertion is proved by induction on $d(D)$.
\end{proof}

By Lemma \ref{lem:d+2} we conclude 
$$\spn \langle D\rangle
\leq 2d(D)+2(\mu(s_A)+\mu(s_B)-2)\leq 4d(D).$$
This completes the proof of Proposition \ref{prop4.1}.
\end{proof}

\section{Chord diagrammatic description}
For a curve $\gamma$ on $S$, the bracket polynomial $\langle D_{\gamma} \rangle$ is actually determined by a regular neighborhood of $\gamma(I)$ in $S$.
In this section, we study $\langle D_{\gamma} \rangle$ from this point of view.

Let $d$ be a positive integer.
An \emph{oriented linear chord diagram} of $d$ chords
is a set $C=\{ (i_1,j_1),\ldots,(i_d,j_d)\}$ of $d$ ordered pairs of
integers such that $\{ i_k\}_k \cup \{ j_k\}_k=\{1,\ldots,2d\}$.
Each element of $C$ is called a \emph{chord} of $C$.
A chord $(i,j)$ is called \emph{positive} if $i<j$, and \emph{negative} otherwise.
Finally, a \emph{state} of $C$ is a map $s\colon C\to \{A,B\}$,
where $A$ and $B$ are fixed symbols.

Let $\gamma$ be a curve with $d(\gamma)=d$.
Then the inverse image of the double points of $\gamma$ are $2d$ points on $I$.
We name them $\{ p_i\}_i$ so that $0<p_1<p_2<\cdots <p_{2d}<1$.
The oriented linear chord diagram $C_{\gamma}$ is defined by the condition
that an ordered pair $(i,j)$ is a chord of $C_{\gamma}$ if and only if
$\gamma(p_i)=\gamma(p_j)$ and the pair
$(d\gamma/dt(p_i),d\gamma/dt(p_j))$ of tangent vectors
matches the orientation of $S$.

\begin{rem}
Conversely,
for any oriented linear chord diagram $C$, there is a curve $\gamma$ on some oriented surface $S$ such that $C=C_{\gamma}$.
\end{rem}

Let $C$ be an oriented linear chord diagram of $d$ chords
and $s$ a state of $C$.
For each chord $c=(i,j)\in C$, we define a subset $R_c\subset \mathfrak{S}_{2d+1}$ of permutations of $2d+1$ letters $\{ 0,1,\ldots,2d\}$ in the following way.
\begin{itemize}
\item
If $s(c)=A$ and $c$ is positive, or $s(c)=B$ and $c$ is negative,
then we set $R_c=\{ (i,j-1),(i-1,j)\}$.
\item
If $s(c)=A$ and $c$ is negative, or $s(c)=B$ and $c$ is positive,
then we set $R_c=\{ (i,j),(i-1,j-1)\}$.
\end{itemize}
Consider the subgroup of $\mathfrak{S}_{2d+1}$ generated by $\bigcup_{c\in C} R_c$, and let $\Gamma_s$ be the number of orbits of the action of this group on $\{ 0,\ldots,2d\}$.

We set
\[
\langle C|s \rangle :=
A^{|s^{-1}(A)|-|s^{-1}(B)|} (-A^2-A^{-2})^{\Gamma_s-1},
\]
where $|s^{-1}(A)|$ denotes the cardinality of the set $s^{-1}(A)$,
and we define
\[
\langle C \rangle:=\sum_s \langle C|s \rangle,
\]
where the sum runs over all states of $C$.
We also define
\[
\LL_C(A):=(-A)^{-3w(C)} \langle C \rangle,
\]
where $w(C)$ is the number of positive chords minus the number of negative chords of $C$.

\begin{prop}
\label{prop:g-C}
Let $\gamma$ be a curve on $S$.
Then $\langle D_{\gamma} \rangle=\langle C_{\gamma} \rangle$
and $\LL_{\gamma}(A)=\LL_{C_{\gamma}}(A)$.
\end{prop}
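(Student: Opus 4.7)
The plan is to check that all three ingredients of the bracket polynomial, namely the bijection of states, the count $\alpha(s)-\beta(s)$, and the number of components $\mu(s)$, transport faithfully from the topological picture $D_{\gamma}$ to the combinatorial picture $C_{\gamma}$. Once this is done, the equality $\langle D_{\gamma}\rangle=\langle C_{\gamma}\rangle$ is immediate from the definitions, and $\LL_{\gamma}(A)=\LL_{C_{\gamma}}(A)$ follows because the writhe of $D_{\gamma}$ coincides with $w(C_{\gamma})$: by the convention that turns a double point of $\gamma$ into a crossing (the strand traversed first goes over), a crossing of $D_{\gamma}$ associated with a chord $(i,j)$ is positive precisely when the ordered pair of tangent vectors at $(p_i,p_j)$ agrees with the orientation of $S$, that is, precisely when $i<j$.

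For the state correspondence, crossings of $D_{\gamma}$ are canonically in bijection with chords of $C_{\gamma}$, and under this bijection a type A (resp. type B) splitting of a crossing matches $s(c)=A$ (resp. $B$). Hence $\alpha(s)=|s^{-1}(A)|$ and $\beta(s)=|s^{-1}(B)|$, so the monomial $A^{\alpha(s)-\beta(s)}$ is the same on both sides.

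The crux is the identity $\mu(s)=\Gamma_s$. Let $0<p_1<\cdots<p_{2d}<1$ be the preimages of the double points. The $2d+1$ open subintervals of $I$ cut out by these points give $2d+1$ embedded arcs in $D_{\gamma}$, which I will label $I_0,\dots,I_{2d}$; their union is $D_{\gamma}$ with the crossings removed. After performing the splittings prescribed by $s$, the resulting 1-manifold $D_{\gamma}(s)$ is obtained from $\bigsqcup_k I_k$ by pairwise identifications of endpoints, one such pair of identifications at each crossing. So two arcs $I_a$ and $I_b$ lie in the same component of $D_{\gamma}(s)$ if and only if they are related by the equivalence relation generated by these gluings. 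Every point of $D_{\gamma}(s)$ lies on some $I_k$, so every connected component of $D_{\gamma}(s)$ contains at least one arc; consequently $\mu(s)$ equals the number of equivalence classes on $\{0,1,\dots,2d\}$ induced by the gluings.

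What remains is a local check, performed at each crossing, that the two transpositions placed in $R_c$ by the definition agree with the two pairs of endpoint identifications produced by the splitting. There are four cases, indexed by whether the chord $c=(i,j)$ is positive or negative and whether $s(c)=A$ or $B$; using the pictures in Figures \ref{splitting1} and \ref{splitting2} one reads off that, for a positive chord with $s(c)=A$, the splitting joins the end of $I_{i-1}$ to the start of $I_j$ and the end of $I_{j-1}$ to the start of $I_i$, matching the transpositions $(i-1,j)$ and $(i,j-1)$; the remaining three cases are analogous and give exactly the prescription in the definition of $R_c$. The main obstacle is precisely this orientation bookkeeping, i.e.\ convincing oneself that the convention for ``positive/negative chord'' and the convention for ``type A/B splitting according to the orientation of $S$'' interact to produce the two transpositions listed and not their alternative. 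Once this local identification is granted at every crossing, the orbits of $\bigcup_{c\in C_{\gamma}} R_c$ on $\{0,\dots,2d\}$ are exactly the equivalence classes described above, yielding $\Gamma_s=\mu(s)$ and finishing the proof.
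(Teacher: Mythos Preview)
Your proof is correct and follows essentially the same route as the paper. The paper introduces marked points $q_0,\dots,q_{2d}$ (the midpoints of your arcs $I_k$), builds the graph on these vertices with an edge for each transposition in $\bigcup_c R_c$, and observes via the same four-case local check (their Figure~\ref{fig:4cases}) that this graph is homeomorphic to the splice $D_{\gamma}(s)$; your argument with the arcs $I_k$ and the equivalence relation generated by the endpoint gluings is the same verification phrased in slightly different language. The paper is terser on the writhe identity $w(D_{\gamma})=w(C_{\gamma})$, simply asserting it, whereas you spell out the reason.
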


\begin{proof}
First of all, the second formula follows from the first, since
$w(D_{\gamma})=w(C_{\gamma})$.

Now introduce $2d+1$ points $q_i$, $0\le i\le 2d$.
With respect to the parametrization of $\gamma$, these points have the following interpretation:
$q_0=0$, $q_i=(p_i+p_{i+1})/2$ for $1\le i\le 2d-1$, and $q_{2d}=1$.
For a state $s$ of $\gamma$, let $\Gamma(C_{\gamma},s)$ be the graph with the set of vertices being $\{ q_i\}_i$, and the set of edges determined by the condition that $q_k$ and $q_l$ are connected by an edge if and only if $(k,l)\in \bigcup_{c\in C_{\gamma}}R_c$. 
Then $\Gamma(C_{\gamma},s)$ is homeomorphic to the splice of $D_{\gamma}$ by $s$.
See Figure \ref{fig:4cases}.
(Here and in what follows,
we assume the counter-clockwise orientation in any figure.)
The first formula follows from this observation.
\end{proof}

\begin{center}
\begin{figure}
\input{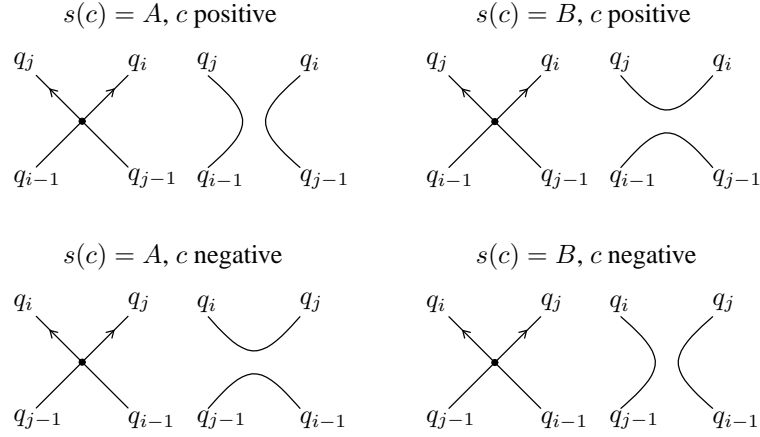}
\caption{proof of Proposition \ref{prop:g-C}}
\label{fig:4cases}
\end{figure}
\end{center}

In the below, we record elementary properties of $\langle D_{\gamma} \rangle$ in terms of chord diagrams.

Let $C=\{ (i_1,j_1),\ldots,(i_d,j_d)\}$ be an oriented linear chord diagram.
Fix $0\le \ell \le 2d$.
For $i\in \{ 1,\ldots,2d\}$, we set
\[
i':=\begin{cases}
i & \quad \text{if $i\le \ell$}, \\
i+2 & \quad \text{if $i>\ell$}.
\end{cases}
\]
We define
\begin{align*}
C_+^\ell&:=\{(i'_k,j'_k)\}_k \cup \{ (\ell,\ell+1)\}, \\
C_-^\ell&:=\{(i'_k,j'_k)\}_k \cup \{ (\ell+1,\ell)\}.
\end{align*}
Also, we define
\begin{align*}
C_+^{\wedge}&:=\{(i_k+1,j_k+1)\}_k \cup \{ (1,2d+2)\}, \\
C_-^{\wedge}&:=\{(i_k+1,j_k+1)\}_k \cup \{ (2d+2,1)\}.
\end{align*}

\begin{prop}[Birth/death of monogons]
\label{prop:monogon}
We have
\begin{align*}
& \langle C_+^\ell \rangle=\langle C_+^{\wedge} \rangle
=(-A^3)\langle C \rangle, \\
& \langle C_-^\ell \rangle=\langle C_-^{\wedge} \rangle
=(-A^{-3})\langle C \rangle.
\end{align*}
\end{prop}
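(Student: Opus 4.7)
The plan is a direct state-sum computation. In each of the four cases a state $\tilde s$ of the enlarged chord diagram is uniquely determined by a state $s$ of $C$ together with a choice $\tilde s(c^*)\in\{A,B\}$ for the newly added chord $c^*$. Writing $\tilde s_A,\tilde s_B$ for the two extensions of $s$ and $\delta=-A^2-A^{-2}$, the state sum factors as
\[
\sum_{s}\langle C\mid s\rangle\,\bigl(A\,\delta^{\Gamma_{\tilde s_A}-\Gamma_s}+A^{-1}\,\delta^{\Gamma_{\tilde s_B}-\Gamma_s}\bigr),
\]
and analogously for the other three cases. Since $A\delta+A^{-1}=-A^{3}$ and $A+A^{-1}\delta=-A^{-3}$, the proposition reduces to the orbit-count claim
\[
(\Gamma_{\tilde s_A}-\Gamma_s,\;\Gamma_{\tilde s_B}-\Gamma_s)=
\begin{cases}(1,0)&\text{for }C_+^{\ell},\ C_+^{\wedge},\\(0,1)&\text{for }C_-^{\ell},\ C_-^{\wedge},\end{cases}
\]
for every state $s$ of $C$.

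For $C_{\pm}^{\ell}$, the insertion of $c^*$ creates three new segments that collectively replace one old segment. A short local analysis shows that the reindexed old chord generators touch only the two outer new segments, connecting them to the respective sides of the orbit of the replaced segment, while the generators $R_{c^*}$ either (i) bridge the outer two new segments and leave the middle one as a fresh singleton (orbit increment $+1$), or (ii) fuse all three new segments into a single orbit that recovers the orbit of the replaced segment (orbit increment $0$). Which splitting of $c^*$ realizes which behavior is immediate from the definition of $R_c$, and yields the $(1,0)$ pattern for $C_+^{\ell}$ and $(0,1)$ for $C_-^{\ell}$.

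For $C_{\pm}^{\wedge}$, the shift $i\mapsto i+1$ places all old chord-endpoints at new positions $2,\ldots,2d+1$, so the reindexed old chord action on new segments $1,\ldots,2d+1$ mirrors the old action on old segments $0,\ldots,2d$, while new-$0$ and new-$(2d+2)$ appear as singletons. The generators attached to $c^*$ then couple these singletons to new segments $1$ and $2d+1$, which correspond to the old boundary segments $0$ and $2d$. The same orbit-count pattern emerges, provided one knows the key fact that old segments $0$ and $2d$ lie in a common orbit under every state $s$ of $C$. I establish this via Proposition \ref{prop:g-C}: realize $C=C_\gamma$ for some curve $\gamma$ (possible by the Remark after the definition of oriented linear chord diagrams), identify $\Gamma_s$ with the number of components of the spliced diagram $D_\gamma(s)$, and observe that $D_\gamma(s)$ always contains an arc from $\gamma(0)$ to $\gamma(1)$ whose endpoints lie in those two boundary segments.

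The main obstacle is precisely this $C_{\pm}^{\wedge}$ bookkeeping, where both transpositions in $R_{c^*}$ act non-trivially on the boundary singletons; the ``same orbit'' fact supplied by Proposition \ref{prop:g-C} is what forces one of the resulting merges to be within-orbit, producing exactly the increments $(1,0)$ or $(0,1)$ needed to match $-A^{\pm 3}$.
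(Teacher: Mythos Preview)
Your argument is correct, but it is genuinely different from the paper's. The paper's proof is a one-liner: realize $C=C_\gamma$ via the Remark, observe that each of $C_\pm^\ell$ and $C_\pm^\wedge$ is $C_{\gamma'}$ for a curve $\gamma'$ obtained from $\gamma$ by inserting a small monogon (kink), and then quote the standard effect of the Reidemeister move $R_1$ on the Kauffman bracket together with Proposition~\ref{prop:g-C}. In other words, the paper exports the whole computation to the diagram side and invokes Kauffman's classical $R_1$ calculation.

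You instead stay on the chord-diagram side and carry out the state sum directly, reducing everything to the orbit-count identities $(\Gamma_{\tilde s_A}-\Gamma_s,\Gamma_{\tilde s_B}-\Gamma_s)\in\{(1,0),(0,1)\}$. For $C_\pm^\ell$ this is a clean local computation; for $C_\pm^\wedge$ you correctly isolate the one nontrivial ingredient, namely that the boundary segments $0$ and $2d$ always lie in the same orbit for every state $s$, and you extract this from Proposition~\ref{prop:g-C}. Your route is longer but more explicit, and it makes transparent exactly where the curve realization is needed. Incidentally, that last fact also has a purely combinatorial proof: in the graph on $\{0,\dots,2d\}$ with edge set $\bigcup_c R_c$, each chord contributes one incidence at each of the four segments $i-1,i,j-1,j$, so every interior segment has degree~$2$ while $0$ and $2d$ have degree~$1$; hence the graph decomposes into cycles together with a single path joining $0$ to $2d$. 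With this observation your proof becomes entirely self-contained on the chord-diagram side, which is the natural dividend of the approach you chose.
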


\begin{proof}
If $C=C_{\gamma}$ for some curve $\gamma$, then $C_+^{\ell}$ corresponds to a suitable insertion of a negative monogon to $\gamma$.
Therefore, from the behavior of the bracket polynomial under the Reidemeister move $R_1$, we obtain $\langle C_+^{\ell} \rangle=(-A^3)\langle C \rangle$.
The other cases are treated similarly.
\end{proof}

Let
\[
C=\{ (i_1,j_1),\ldots,(i_d,j_d)\} \quad \text{and} \quad 
D=\{ (k_1,\ell_1),\ldots,(k_e,\ell_e)\}
\]
be oriented linear chord diagrams.
We define the \emph{stacking} of $C$ and $D$ by
\[
C\sharp D:=
\{ (i_a,j_a)\}_a \cup \{(k_b+2d,\ell_b+2d)\}_b.
\]

\begin{prop}[Stacking formula]
\label{prop:stacking}
We have $\langle C\sharp D\rangle=\langle C\rangle \langle D \rangle$.
In particular, $\spn \langle C\sharp D\rangle=\spn \langle C\rangle +\spn \langle D\rangle$.
\end{prop}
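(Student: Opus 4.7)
The plan is to factor the bracket polynomial state by state. Each state $s$ of $C\sharp D$ corresponds to a unique pair $(s_C, s_D)$ of states of $C$ and $D$, via the natural bijection of chord sets coming from the definition of stacking, and I will show that the summand $\langle C\sharp D \mid s\rangle$ factorizes as $\langle C\mid s_C\rangle\langle D\mid s_D\rangle$. Summing over all states then yields $\langle C\sharp D\rangle=\langle C\rangle\langle D\rangle$.

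The $A$-exponent $|s^{-1}(A)|-|s^{-1}(B)|$ is additive under the partition of chords, so the $A$-factor splits immediately. The key identity to establish is
\[
\Gamma_s=\Gamma_{s_C}+\Gamma_{s_D}-1,
\]
which will yield the desired factorization of the $(-A^2-A^{-2})$-factor. To prove it, I inspect the four cases defining $R_c$ and observe that for every chord $c$ of $C$, the transpositions in $R_c$ use only indices in $\{0,1,\dots,2d\}$, while for every chord of $C\sharp D$ coming from a chord of $D$, the transpositions in its $R$-set use only indices in $\{2d,2d+1,\dots,2(d+e)\}$. Consequently, the subgroup of $\mathfrak{S}_{2(d+e)+1}$ generated by the $C$-relations acts on $\{0,\dots,2d\}$ with $\Gamma_{s_C}$ orbits and fixes everything else, and dually for $D$. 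Since the two index ranges meet precisely at the element $2d$, the orbits of the combined group on $\{0,\dots,2(d+e)\}$ are obtained by merging the $C$-orbit through $2d$ with the $D$-orbit through $2d$, producing $(\Gamma_{s_C}-1)+(\Gamma_{s_D}-1)+1$ orbits in total.

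Once the product formula is established, the span statement follows from the fact that $\mathbb{Z}[A,A^{-1}]$ is an integral domain: the leading (resp.\ trailing) coefficient of a product of nonzero Laurent polynomials equals the product of the leading (resp.\ trailing) coefficients, so $\spn(\langle C\rangle\langle D\rangle)=\spn\langle C\rangle+\spn\langle D\rangle$. The main obstacle I anticipate is the orbit-count identity: one has to verify carefully, in all four subcases of the definition of $R_c$, that the transpositions in $R_c$ for $c\in C$ never involve indices larger than $2d$ and that the transpositions in $R_{c'}$ for $c'$ coming from a chord of $D$ never involve indices smaller than $2d$. Once this support property is pinned down, the merging-of-orbits argument is immediate, and everything else is formal bookkeeping.
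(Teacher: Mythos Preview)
Your proposal is correct and follows essentially the same approach as the paper: both arguments reduce to the state-by-state factorization via the key identity $\Gamma_s=\Gamma_{s_C}+\Gamma_{s_D}-1$, which the paper records without further comment and you justify in detail via the support analysis of the transpositions in $R_c$. Your verification that the $C$-transpositions lie in $\{0,\dots,2d\}$ and the shifted $D$-transpositions lie in $\{2d,\dots,2(d+e)\}$, overlapping exactly at $2d$, is precisely the content behind the paper's one-line ``observation.''
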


\begin{proof}
Since the chords of $C\sharp D$ are in one-to-one correspondence with
the disjoint union of the chords of $C$ and $D$,
any state of $C\sharp D$ is of the form $s\sharp t$, where
$s$ is a state of $C$ and $t$ is a state of $D$.
The assertion follows from the observation that
$|\Gamma(C\sharp D,s\sharp t)|=|\Gamma(C,s)|+|\Gamma(D,t)|-1$.
\end{proof}

\begin{prop}
\label{prop:parity}
Let $C$ be an oriented linear chord diagram of $d$ chords.
\begin{itemize}
\item If $d$ is even, then $\langle C \rangle$ has
only terms of even degree.
\item If $d$ is odd, then $\langle C \rangle$ has
only terms of odd degree.
\end{itemize}
\end{prop}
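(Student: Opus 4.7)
The plan is to inspect the degrees of the monomials appearing in each summand $\langle C \mid s\rangle$ and observe that they all have the same parity, determined by $d$.

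First, I would record the elementary identity $|s^{-1}(A)|+|s^{-1}(B)|=d$, since every chord receives exactly one of the two labels under a state $s$. This gives
\[
|s^{-1}(A)|-|s^{-1}(B)| = d - 2|s^{-1}(B)| \equiv d \pmod 2.
\]
Hence the prefactor $A^{|s^{-1}(A)|-|s^{-1}(B)|}$ is a single monomial whose exponent has the same parity as $d$, uniformly in $s$.

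Next, I would analyze the factor $(-A^2-A^{-2})^{\Gamma_s-1}$. Since $-A^2-A^{-2}\in\mathbb{Z}[A^2,A^{-2}]$, every one of its integer powers lies in $\mathbb{Z}[A^2,A^{-2}]$ as well, and so contributes only monomials of even degree. Multiplying by $A^{|s^{-1}(A)|-|s^{-1}(B)|}$ then yields a Laurent polynomial in which every monomial has degree congruent to $d$ modulo $2$.

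Finally, summing over all states $s$ preserves this parity condition, so $\langle C \rangle=\sum_s\langle C\mid s\rangle$ has only terms of even degree when $d$ is even and only terms of odd degree when $d$ is odd. There is no real obstacle here; the argument is a direct parity bookkeeping and does not need induction or any appeal to the geometry of $\gamma$.
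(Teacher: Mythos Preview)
Your argument is correct and is exactly the approach taken in the paper: the paper's proof simply states that each summand $\langle C\mid s\rangle$ has the parity property by definition, and you have spelled out precisely why (via $|s^{-1}(A)|-|s^{-1}(B)|\equiv d\pmod 2$ and the fact that $(-A^2-A^{-2})^{\Gamma_s-1}$ contributes only even exponents). There is nothing to add.
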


\begin{proof}
By definition, $\langle C|s \rangle$ has this property, so does $\langle C \rangle$.
\end{proof}

\begin{prop}[Reversing all the chords]
Let $C=\{ (i_1,j_1),\ldots,(i_d,j_d)\}$ be an oriented linear chord diagram and set $\overline{C}:=\{ (j_1,i_1),\ldots,(j_d,i_d)\}$.
Then $\langle \overline{C}\rangle=\langle C\rangle|_{A\mapsto A^{-1}}$.
\end{prop}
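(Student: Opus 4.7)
The plan is to set up an explicit involutive bijection between the states of $C$ and those of $\overline{C}$, under which the contributions to the bracket are related by the substitution $A\mapsto A^{-1}$. Given a state $s$ of $C$, define the corresponding state $\bar s$ of $\overline{C}$ by declaring that $\bar s$ assigns to the reversed chord $(j_k,i_k)\in\overline{C}$ the opposite symbol to $s((i_k,j_k))$. Under this map one has $|\bar s^{-1}(A)|=|s^{-1}(B)|$ and $|\bar s^{-1}(B)|=|s^{-1}(A)|$, so the monomial factor $A^{|\bar s^{-1}(A)|-|\bar s^{-1}(B)|}$ is exactly the image of $A^{|s^{-1}(A)|-|s^{-1}(B)|}$ under $A\mapsto A^{-1}$.

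The main step is then to verify that $\Gamma_{\bar s}=\Gamma_s$. For this I would check all four case combinations of sign of $c$ and value of $s(c)$ against the corresponding case for $\bar c$ and $\bar s(\bar c)$. Because reversing a chord flips its sign and we simultaneously swap $A$ and $B$, the two conditions appearing in the definition of $R_c$ are interchanged in a compatible way: in every case the set $R_{\bar c}$ coincides with $R_c$ viewed as a set of unordered transpositions in $\mathfrak{S}_{2d+1}$. For example, if $c=(i,j)$ is positive with $s(c)=A$, then $R_c=\{(i,j-1),(i-1,j)\}$; meanwhile $\bar c=(j,i)$ is negative with $\bar s(\bar c)=B$, landing in the first bullet of the definition and yielding $R_{\bar c}=\{(j,i-1),(j-1,i)\}$, which generates the same pair of transpositions. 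The other three cases are analogous. Consequently the subgroup of $\mathfrak{S}_{2d+1}$ generated by $\bigcup_{\bar c}R_{\bar c}$ equals the one generated by $\bigcup_{c}R_{c}$, so the orbit counts agree.

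Combining these two observations, and using that $(-A^2-A^{-2})$ is invariant under $A\mapsto A^{-1}$, one obtains
\[
\langle \overline{C}\mid\bar s\rangle
= A^{-(|s^{-1}(A)|-|s^{-1}(B)|)}(-A^2-A^{-2})^{\Gamma_s-1}
= \langle C\mid s\rangle\bigr|_{A\mapsto A^{-1}}.
\]
Since $s\mapsto \bar s$ is a bijection between the states of $C$ and $\overline{C}$, summing over all states yields $\langle\overline{C}\rangle=\langle C\rangle|_{A\mapsto A^{-1}}$. The only subtle point is the four-case verification that the transposition set is preserved; once that bookkeeping is done the rest of the argument is purely formal.
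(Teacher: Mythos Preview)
Your proof is correct and follows essentially the same route as the paper's own argument: both set up the bijection $s\mapsto \bar s$ on states by swapping the labels $A$ and $B$ on each reversed chord, and then observe that $\langle \overline{C}\mid \bar s\rangle=\langle C\mid s\rangle|_{A\mapsto A^{-1}}$. You in fact supply more detail than the paper does, explicitly carrying out the four-case check that $R_{\bar c}=R_c$ as sets of transpositions, which the paper leaves implicit.
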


\begin{proof}
There is a natural bijection $\iota$ from the set of chords of $C$ to that of $\overline{C}$ given by $(i_k,j_k)\mapsto (j_k,i_k)$.
This maps positive (resp. negative) chords to negative (resp. positive) chords.
Moreover, it induces a bijection from the set of states of $C$ to that of $\overline{C}$ given by $s\mapsto \overline{s}$, determined by the condition that $\{s(c),\overline{s}(\iota(c))\}=\{A,B\}$ for any chord $c$ of $C$.
Then, it holds that $\langle C|s\rangle=\langle \overline{C}| \overline{s} \rangle|_{A\mapsto A^{-1}}$ for any state $s$ of $C$.
This proves the formula.
\end{proof}

\section{The range of the span}

In this section, we study the range of $\spn \langle C\rangle$.
By Theorem \ref{thm1.5}, $\spn \langle C\rangle \le 4d$ if $C$ has $d$ chords.
Also, by Proposition \ref{prop:parity}, $\spn \langle C\rangle$ is always an even integer.
Fixing $d$, let us consider which even integers not greater than $4d$ are realized as $\spn \langle C\rangle$ for some $C$ with $d$ chords.

We say that an even integer $l$ is \emph{$d$-realizable} if there exists an oriented linear chord diagram $C$ of $d$ chords such that $\spn \langle C\rangle=l$.

If $d=1$, $C=C_1:=\{(1,2)\}$ or $C=\overline{C_1}$.
Thus $\langle C\rangle=-A^{\pm 3}$ and $\spn \langle C\rangle=0$.

If $d=2$, by a direct computation, we see that $0$ and $6$ are $2$-realizable, while $2$, $4$, and $8$ are not.
For example, $C_2=\{(1,3),(2,4)\}$ satisfies $\langle C_2\rangle=A^2+1-A^{-4}$, so that $\spn \langle C_2\rangle=6$.

If $d=3$, we see that $0$, $6$, $10$, and $12$ are $3$-realizable, while $2$, $4$, and $8$ are not.
For example, the stacking $C_1\sharp C_2$ satisfies $\spn \langle C_1\sharp C_2 \rangle=6$;
$C_3=\{ (1,5),(2,4),(6,3)\}$ satisfies $\langle C_3 \rangle=-A^5-A^3+A+A^{-1}-A^{-5}$, so that $\spn \langle C_3 \rangle=10$;
the chord diagram $C(3)$ in Example \ref{ex:odd} below satisfies $\spn \langle C(3) \rangle=12$.

To see the case $d\ge 4$, we consider the following two examples.

\begin{example}
\label{ex:odd}
Let $d\ge 1$ be an odd integer, and set
$
C(d):=\{ (i,i+d)\}_{i=1}^d.
$
Then
\[
\langle C(d) \rangle=\sum_{i=1}^{d-1} (-1)^{i-1} A^{-3d-2+4i}-A^{d+2}.
\]
In particular, if $d\ge 3$, then $\spn \langle C \rangle=(d+2)-(-3d+2)=4d$.
\end{example}

\begin{center}
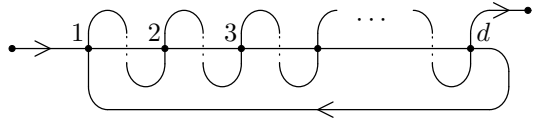
\begin{figure}
{\unitlength 0.1in%
\begin{picture}(27.0000,6.3200)(2.0000,-9.6000)%
%
\special{pn 8}%
\special{ar 700 520 100 120 3.1415927 6.2831853}%
%
\special{pn 8}%
\special{ar 900 680 100 120 6.2831853 3.1415927}%
%
\special{pn 8}%
\special{ar 1100 520 100 120 3.1415927 6.2831853}%
%
\special{pn 8}%
\special{ar 1500 520 100 120 3.1415927 6.2831853}%
%
\special{pn 8}%
\special{ar 1300 680 100 120 6.2831853 3.1415927}%
%
\special{pn 8}%
\special{ar 1700 680 100 120 6.2831853 3.1415927}%
%
\special{pn 8}%
\special{ar 1900 520 100 120 3.1415927 4.7123890}%
%
\special{pn 8}%
\special{ar 2300 520 100 120 4.7123890 6.2831853}%
%
\special{pn 8}%
\special{ar 2500 680 100 120 6.2831853 3.1415927}%
%
\special{pn 8}%
\special{ar 2700 520 100 120 3.1415927 4.7123890}%
%
\special{pn 8}%
\special{pa 2700 400}%
\special{pa 2900 400}%
\special{fp}%
%
\special{pn 4}%
\special{sh 1}%
\special{ar 200 600 16 16 0 6.2831853}%
%
\special{pn 8}%
\special{ar 700 800 100 120 1.5707963 3.1415927}%
%
\special{pn 8}%
\special{ar 2700 800 100 120 6.2831853 1.5707963}%
%
\special{pn 8}%
\special{ar 2700 720 100 120 4.7123890 6.2831853}%
%
\special{pn 4}%
\special{sh 1}%
\special{ar 2900 400 16 16 0 6.2831853}%
%
\special{pn 8}%
\special{pa 200 600}%
\special{pa 2700 600}%
\special{fp}%
%
\special{pn 8}%
\special{pa 1000 680}%
\special{pa 1000 520}%
\special{fp}%
%
\special{pn 8}%
\special{pa 1400 680}%
\special{pa 1400 520}%
\special{fp}%
%
\special{pn 8}%
\special{pa 1800 680}%
\special{pa 1800 520}%
\special{fp}%
%
\special{pn 8}%
\special{pa 2600 520}%
\special{pa 2600 680}%
\special{fp}%
%
\special{pn 4}%
\special{sh 1}%
\special{ar 2600 600 16 16 0 6.2831853}%
%
\special{pn 4}%
\special{sh 1}%
\special{ar 1800 600 16 16 0 6.2831853}%
%
\special{pn 4}%
\special{sh 1}%
\special{ar 1400 600 16 16 0 6.2831853}%
%
\special{pn 4}%
\special{sh 1}%
\special{ar 1000 600 16 16 0 6.2831853}%
%
\special{pn 4}%
\special{sh 1}%
\special{ar 600 600 16 16 0 6.2831853}%
\put(20.0000,-4.8000){\makebox(0,0)[lb]{$\cdots$}}%
%
\special{pn 8}%
\special{pa 400 600}%
\special{pa 320 560}%
\special{fp}%
\special{pa 400 600}%
\special{pa 320 640}%
\special{fp}%
%
\special{pn 8}%
\special{pa 2800 400}%
\special{pa 2720 360}%
\special{fp}%
\special{pa 2800 400}%
\special{pa 2720 440}%
\special{fp}%
\put(5.1000,-5.6000){\makebox(0,0)[lb]{$1$}}%
\put(9.1000,-5.6000){\makebox(0,0)[lb]{$2$}}%
\put(26.3000,-5.6000){\makebox(0,0)[lb]{$d$}}%
\put(13.1000,-5.6000){\makebox(0,0)[lb]{$3$}}%
%
\special{pn 8}%
\special{pa 800 680}%
\special{pa 800 520}%
\special{dt 0.045}%
%
\special{pn 8}%
\special{pa 1200 680}%
\special{pa 1200 520}%
\special{dt 0.045}%
%
\special{pn 8}%
\special{pa 1600 680}%
\special{pa 1600 520}%
\special{dt 0.045}%
%
\special{pn 8}%
\special{pa 2400 680}%
\special{pa 2400 520}%
\special{dt 0.045}%
%
\special{pn 8}%
\special{pa 2700 920}%
\special{pa 700 920}%
\special{fp}%
%
\special{pn 8}%
\special{pa 1800 920}%
\special{pa 1880 880}%
\special{fp}%
\special{pa 1800 920}%
\special{pa 1880 960}%
\special{fp}%
%
\special{pn 8}%
\special{pa 600 800}%
\special{pa 600 520}%
\special{fp}%
%
\special{pn 8}%
\special{pa 2800 800}%
\special{pa 2800 720}%
\special{fp}%
\end{picture}}%
\caption{the curve $\gamma_d$ in Example \ref{ex:odd}}
\label{fig:gamma_odd}
\end{figure}
\end{center}

\begin{proof}
We have $C(d)=C_{\gamma_d}$, where $\gamma_d$ is the curve as shown in Figure \ref{fig:gamma_odd}.
Let $d\ge 1$ be an odd integer. Then
\if 0
We claim that for any odd integer $d\ge 1$,
\begin{equation}
\label{eq:ex_odd}
\langle \gamma_{d+2} \rangle=A^2\langle \gamma_d \rangle
+A^{-3d-4}-A^{-3d}.
\end{equation}
To prove this, we compute:
\fi
\begin{align*}
& \quad 
{\unitlength 0.1in%
\begin{picture}(37.0000,6.3200)(2.0000,-9.6000)%
%
\special{pn 8}%
\special{ar 1500 520 100 120 3.1415927 6.2831853}%
%
\special{pn 8}%
\special{ar 1700 680 100 120 6.2831853 3.1415927}%
%
\special{pn 8}%
\special{ar 1900 520 100 120 3.1415927 6.2831853}%
%
\special{pn 8}%
\special{ar 2300 520 100 120 3.1415927 6.2831853}%
%
\special{pn 8}%
\special{ar 2100 680 100 120 6.2831853 3.1415927}%
%
\special{pn 8}%
\special{ar 2500 680 100 120 6.2831853 3.1415927}%
%
\special{pn 8}%
\special{ar 2700 520 100 120 3.1415927 4.7123890}%
%
\special{pn 8}%
\special{ar 3100 520 100 120 4.7123890 6.2831853}%
%
\special{pn 8}%
\special{ar 3300 680 100 120 6.2831853 3.1415927}%
%
\special{pn 8}%
\special{ar 3500 520 100 120 3.1415927 4.7123890}%
%
\special{pn 8}%
\special{pa 3500 400}%
\special{pa 3700 400}%
\special{fp}%
%
\special{pn 4}%
\special{sh 1}%
\special{ar 1000 600 16 16 0 6.2831853}%
%
\special{pn 8}%
\special{ar 1500 800 100 120 1.5707963 3.1415927}%
%
\special{pn 8}%
\special{ar 3500 800 100 120 6.2831853 1.5707963}%
%
\special{pn 8}%
\special{ar 3500 720 100 120 4.7123890 6.2831853}%
%
\special{pn 8}%
\special{pa 3500 920}%
\special{pa 1500 920}%
\special{fp}%
%
\special{pn 4}%
\special{sh 1}%
\special{ar 3700 400 16 16 0 6.2831853}%
%
\special{pn 8}%
\special{pa 1000 600}%
\special{pa 3500 600}%
\special{fp}%
%
\special{pn 8}%
\special{pa 1800 680}%
\special{pa 1800 520}%
\special{fp}%
%
\special{pn 8}%
\special{pa 2200 680}%
\special{pa 2200 520}%
\special{fp}%
%
\special{pn 8}%
\special{pa 2600 680}%
\special{pa 2600 520}%
\special{fp}%
%
\special{pn 8}%
\special{pa 3400 520}%
\special{pa 3400 680}%
\special{fp}%
%
\special{pn 4}%
\special{sh 1}%
\special{ar 3400 600 16 16 0 6.2831853}%
%
\special{pn 4}%
\special{sh 1}%
\special{ar 2600 600 16 16 0 6.2831853}%
%
\special{pn 4}%
\special{sh 1}%
\special{ar 2200 600 16 16 0 6.2831853}%
%
\special{pn 4}%
\special{sh 1}%
\special{ar 1800 600 16 16 0 6.2831853}%
%
\special{pn 4}%
\special{sh 1}%
\special{ar 1400 600 16 16 0 6.2831853}%
\put(28.0000,-4.8000){\makebox(0,0)[lb]{$\cdots$}}%
%
\special{pn 8}%
\special{pa 1200 600}%
\special{pa 1120 560}%
\special{fp}%
\special{pa 1200 600}%
\special{pa 1120 640}%
\special{fp}%
%
\special{pn 8}%
\special{pa 2600 920}%
\special{pa 2680 880}%
\special{fp}%
\special{pa 2600 920}%
\special{pa 2680 960}%
\special{fp}%
%
\special{pn 8}%
\special{pa 3600 400}%
\special{pa 3520 360}%
\special{fp}%
\special{pa 3600 400}%
\special{pa 3520 440}%
\special{fp}%
\put(13.1000,-5.6000){\makebox(0,0)[lb]{$1$}}%
\put(17.1000,-5.6000){\makebox(0,0)[lb]{$2$}}%
\put(34.3000,-5.6000){\makebox(0,0)[lb]{$d+2$}}%
\put(21.1000,-5.6000){\makebox(0,0)[lb]{$3$}}%
\put(2.0000,-7.3500){\makebox(0,0)[lb]{$\langle \gamma_{d+2} \rangle=$}}%
%
\special{pn 8}%
\special{pa 2400 520}%
\special{pa 2400 680}%
\special{dt 0.045}%
%
\special{pn 8}%
\special{pa 1600 520}%
\special{pa 1600 680}%
\special{dt 0.045}%
%
\special{pn 8}%
\special{pa 2000 520}%
\special{pa 2000 680}%
\special{dt 0.045}%
%
\special{pn 8}%
\special{pa 3200 520}%
\special{pa 3200 680}%
\special{dt 0.045}%
%
\special{pn 8}%
\special{pa 1400 800}%
\special{pa 1400 520}%
\special{fp}%
%
\special{pn 8}%
\special{pa 3600 800}%
\special{pa 3600 720}%
\special{fp}%
%
\special{pn 13}%
\special{pa 900 400}%
\special{pa 800 660}%
\special{fp}%
\special{pa 800 660}%
\special{pa 900 920}%
\special{fp}%
%
\special{pn 13}%
\special{pa 3800 400}%
\special{pa 3900 660}%
\special{fp}%
\special{pa 3900 660}%
\special{pa 3800 920}%
\special{fp}%
\end{picture}}
& \hspace{4em} 
{\unitlength 0.1in%
\begin{picture}(34.0000,5.9200)(5.0000,-7.2000)%
%
\special{pn 8}%
\special{ar 1500 320 100 120 3.1415927 6.2831853}%
%
\special{pn 8}%
\special{ar 1700 480 100 120 6.2831853 3.1415927}%
%
\special{pn 8}%
\special{ar 1900 320 100 120 3.1415927 6.2831853}%
%
\special{pn 8}%
\special{ar 2300 320 100 120 3.1415927 6.2831853}%
%
\special{pn 8}%
\special{ar 2100 480 100 120 6.2831853 3.1415927}%
%
\special{pn 8}%
\special{ar 2500 480 100 120 6.2831853 3.1415927}%
%
\special{pn 8}%
\special{ar 2700 320 100 120 3.1415927 4.7123890}%
%
\special{pn 8}%
\special{ar 3100 320 100 120 4.7123890 6.2831853}%
%
\special{pn 8}%
\special{ar 3300 480 100 120 6.2831853 3.1415927}%
%
\special{pn 8}%
\special{ar 3500 320 100 120 3.1415927 4.7123890}%
%
\special{pn 8}%
\special{pa 3500 200}%
\special{pa 3700 200}%
\special{fp}%
%
\special{pn 4}%
\special{sh 1}%
\special{ar 1000 400 16 16 0 6.2831853}%
%
\special{pn 8}%
\special{ar 1500 600 100 120 1.5707963 3.1415927}%
%
\special{pn 8}%
\special{ar 3500 600 100 120 6.2831853 1.5707963}%
%
\special{pn 8}%
\special{ar 3500 520 100 120 4.7123890 6.2831853}%
%
\special{pn 8}%
\special{pa 3500 720}%
\special{pa 1500 720}%
\special{fp}%
%
\special{pn 4}%
\special{sh 1}%
\special{ar 3700 200 16 16 0 6.2831853}%
%
\special{pn 4}%
\special{sh 1}%
\special{ar 3400 400 16 16 0 6.2831853}%
%
\special{pn 4}%
\special{sh 1}%
\special{ar 2600 400 16 16 0 6.2831853}%
%
\special{pn 4}%
\special{sh 1}%
\special{ar 2200 400 16 16 0 6.2831853}%
%
\special{pn 4}%
\special{sh 1}%
\special{ar 1800 400 16 16 0 6.2831853}%
\put(28.0000,-2.8000){\makebox(0,0)[lb]{$\cdots$}}%
\put(17.1000,-3.6000){\makebox(0,0)[lb]{$2$}}%
\put(34.3000,-3.6000){\makebox(0,0)[lb]{$d+2$}}%
\put(21.1000,-3.6000){\makebox(0,0)[lb]{$3$}}%
%
\special{pn 8}%
\special{ar 1320 320 80 80 6.2831853 1.5707963}%
%
\special{pn 8}%
\special{ar 1480 480 80 80 3.1415927 4.7123890}%
%
\special{pn 8}%
\special{pa 1320 400}%
\special{pa 1000 400}%
\special{fp}%
%
\special{pn 8}%
\special{pa 1480 400}%
\special{pa 3500 400}%
\special{fp}%
\put(5.0000,-5.1000){\makebox(0,0)[lb]{$=A$}}%
%
\special{pn 8}%
\special{pa 1800 480}%
\special{pa 1800 440}%
\special{fp}%
%
\special{pn 8}%
\special{pa 1800 360}%
\special{pa 1800 320}%
\special{fp}%
%
\special{pn 8}%
\special{pa 2200 320}%
\special{pa 2200 360}%
\special{fp}%
%
\special{pn 8}%
\special{pa 2200 440}%
\special{pa 2200 480}%
\special{fp}%
%
\special{pn 8}%
\special{pa 2600 480}%
\special{pa 2600 440}%
\special{fp}%
%
\special{pn 8}%
\special{pa 2600 360}%
\special{pa 2600 320}%
\special{fp}%
%
\special{pn 8}%
\special{pa 3400 480}%
\special{pa 3400 440}%
\special{fp}%
%
\special{pn 8}%
\special{pa 3400 360}%
\special{pa 3400 320}%
\special{fp}%
%
\special{pn 8}%
\special{pa 1600 320}%
\special{pa 1600 480}%
\special{dt 0.045}%
%
\special{pn 8}%
\special{pa 2000 320}%
\special{pa 2000 480}%
\special{dt 0.045}%
%
\special{pn 8}%
\special{pa 2400 320}%
\special{pa 2400 480}%
\special{dt 0.045}%
%
\special{pn 8}%
\special{pa 3200 320}%
\special{pa 3200 480}%
\special{dt 0.045}%
%
\special{pn 13}%
\special{pa 900 200}%
\special{pa 800 460}%
\special{fp}%
\special{pa 800 460}%
\special{pa 900 720}%
\special{fp}%
%
\special{pn 8}%
\special{pa 1400 600}%
\special{pa 1400 480}%
\special{fp}%
%
\special{pn 8}%
\special{pa 3600 600}%
\special{pa 3600 520}%
\special{fp}%
%
\special{pn 13}%
\special{pa 3800 200}%
\special{pa 3900 460}%
\special{fp}%
\special{pa 3900 460}%
\special{pa 3800 720}%
\special{fp}%
\end{picture}}
& \hspace{6em} 
{\unitlength 0.1in%
\begin{picture}(35.0000,5.9200)(2.0000,-7.2000)%
%
\special{pn 8}%
\special{ar 1300 320 100 120 3.1415927 6.2831853}%
%
\special{pn 8}%
\special{ar 1500 480 100 120 6.2831853 3.1415927}%
%
\special{pn 8}%
\special{ar 1700 320 100 120 3.1415927 6.2831853}%
%
\special{pn 8}%
\special{ar 2100 320 100 120 3.1415927 6.2831853}%
%
\special{pn 8}%
\special{ar 1900 480 100 120 6.2831853 3.1415927}%
%
\special{pn 8}%
\special{ar 2300 480 100 120 6.2831853 3.1415927}%
%
\special{pn 8}%
\special{ar 2500 320 100 120 3.1415927 4.7123890}%
%
\special{pn 8}%
\special{ar 2900 320 100 120 4.7123890 6.2831853}%
%
\special{pn 8}%
\special{ar 3100 480 100 120 6.2831853 3.1415927}%
%
\special{pn 8}%
\special{ar 3300 320 100 120 3.1415927 4.7123890}%
%
\special{pn 8}%
\special{pa 3300 200}%
\special{pa 3500 200}%
\special{fp}%
%
\special{pn 4}%
\special{sh 1}%
\special{ar 800 400 16 16 0 6.2831853}%
%
\special{pn 8}%
\special{ar 1300 600 100 120 1.5707963 3.1415927}%
%
\special{pn 8}%
\special{ar 3300 600 100 120 6.2831853 1.5707963}%
%
\special{pn 8}%
\special{ar 3300 520 100 120 4.7123890 6.2831853}%
%
\special{pn 8}%
\special{pa 3300 720}%
\special{pa 1300 720}%
\special{fp}%
%
\special{pn 4}%
\special{sh 1}%
\special{ar 3500 200 16 16 0 6.2831853}%
%
\special{pn 4}%
\special{sh 1}%
\special{ar 3200 400 16 16 0 6.2831853}%
%
\special{pn 4}%
\special{sh 1}%
\special{ar 2400 400 16 16 0 6.2831853}%
%
\special{pn 4}%
\special{sh 1}%
\special{ar 2000 400 16 16 0 6.2831853}%
%
\special{pn 4}%
\special{sh 1}%
\special{ar 1600 400 16 16 0 6.2831853}%
\put(26.0000,-2.8000){\makebox(0,0)[lb]{$\cdots$}}%
\put(15.1000,-3.6000){\makebox(0,0)[lb]{$2$}}%
\put(32.3000,-3.6000){\makebox(0,0)[lb]{$d+2$}}%
\put(19.1000,-3.6000){\makebox(0,0)[lb]{$3$}}%
%
\special{pn 8}%
\special{ar 1120 480 80 80 4.7123890 6.2831853}%
%
\special{pn 8}%
\special{ar 1280 320 80 80 1.5707963 3.1415927}%
%
\special{pn 8}%
\special{pa 1120 400}%
\special{pa 800 400}%
\special{fp}%
%
\special{pn 8}%
\special{pa 1280 400}%
\special{pa 3300 400}%
\special{fp}%
\put(2.0000,-5.1000){\makebox(0,0)[lb]{$+A^{-1}$}}%
%
\special{pn 8}%
\special{pa 1600 480}%
\special{pa 1600 440}%
\special{fp}%
%
\special{pn 8}%
\special{pa 1600 360}%
\special{pa 1600 320}%
\special{fp}%
%
\special{pn 8}%
\special{pa 2000 320}%
\special{pa 2000 360}%
\special{fp}%
%
\special{pn 8}%
\special{pa 2000 440}%
\special{pa 2000 480}%
\special{fp}%
%
\special{pn 8}%
\special{pa 2400 480}%
\special{pa 2400 440}%
\special{fp}%
%
\special{pn 8}%
\special{pa 2400 360}%
\special{pa 2400 320}%
\special{fp}%
%
\special{pn 8}%
\special{pa 3200 480}%
\special{pa 3200 440}%
\special{fp}%
%
\special{pn 8}%
\special{pa 3200 360}%
\special{pa 3200 320}%
\special{fp}%
%
\special{pn 8}%
\special{pa 1400 320}%
\special{pa 1400 480}%
\special{dt 0.045}%
%
\special{pn 8}%
\special{pa 1800 320}%
\special{pa 1800 480}%
\special{dt 0.045}%
%
\special{pn 8}%
\special{pa 2200 320}%
\special{pa 2200 480}%
\special{dt 0.045}%
%
\special{pn 8}%
\special{pa 3000 320}%
\special{pa 3000 480}%
\special{dt 0.045}%
%
\special{pn 13}%
\special{pa 700 200}%
\special{pa 600 460}%
\special{fp}%
\special{pa 600 460}%
\special{pa 700 720}%
\special{fp}%
%
\special{pn 8}%
\special{pa 1200 600}%
\special{pa 1200 480}%
\special{fp}%
%
\special{pn 8}%
\special{pa 3400 600}%
\special{pa 3400 520}%
\special{fp}%
%
\special{pn 13}%
\special{pa 3600 200}%
\special{pa 3700 460}%
\special{fp}%
\special{pa 3700 460}%
\special{pa 3600 720}%
\special{fp}%
\end{picture}}
& \hspace{4em} 
{\unitlength 0.1in%
\begin{picture}(35.0000,5.9200)(2.0000,-7.2000)%
%
\special{pn 8}%
\special{ar 1300 320 100 120 3.1415927 6.2831853}%
%
\special{pn 8}%
\special{ar 1500 480 100 120 6.2831853 3.1415927}%
%
\special{pn 8}%
\special{ar 1700 320 100 120 3.1415927 6.2831853}%
%
\special{pn 8}%
\special{ar 2100 320 100 120 3.1415927 6.2831853}%
%
\special{pn 8}%
\special{ar 1900 480 100 120 6.2831853 3.1415927}%
%
\special{pn 8}%
\special{ar 2300 480 100 120 6.2831853 3.1415927}%
%
\special{pn 8}%
\special{ar 2500 320 100 120 3.1415927 4.7123890}%
%
\special{pn 8}%
\special{ar 2900 320 100 120 4.7123890 6.2831853}%
%
\special{pn 8}%
\special{ar 3100 480 100 120 6.2831853 3.1415927}%
%
\special{pn 8}%
\special{ar 3300 320 100 120 3.1415927 4.7123890}%
%
\special{pn 8}%
\special{pa 3300 200}%
\special{pa 3500 200}%
\special{fp}%
%
\special{pn 4}%
\special{sh 1}%
\special{ar 800 400 16 16 0 6.2831853}%
%
\special{pn 8}%
\special{ar 1300 600 100 120 1.5707963 3.1415927}%
%
\special{pn 8}%
\special{ar 3300 600 100 120 6.2831853 1.5707963}%
%
\special{pn 8}%
\special{ar 3300 520 100 120 4.7123890 6.2831853}%
%
\special{pn 8}%
\special{pa 3300 720}%
\special{pa 1300 720}%
\special{fp}%
%
\special{pn 4}%
\special{sh 1}%
\special{ar 3500 200 16 16 0 6.2831853}%
%
\special{pn 4}%
\special{sh 1}%
\special{ar 3200 400 16 16 0 6.2831853}%
%
\special{pn 4}%
\special{sh 1}%
\special{ar 2400 400 16 16 0 6.2831853}%
%
\special{pn 4}%
\special{sh 1}%
\special{ar 2000 400 16 16 0 6.2831853}%
\put(26.0000,-2.8000){\makebox(0,0)[lb]{$\cdots$}}%
\put(32.3000,-3.6000){\makebox(0,0)[lb]{$d+2$}}%
\put(19.1000,-3.6000){\makebox(0,0)[lb]{$3$}}%
%
\special{pn 8}%
\special{ar 1120 320 80 80 6.2831853 1.5707963}%
%
\special{pn 8}%
\special{ar 1280 480 80 80 3.1415927 4.7123890}%
%
\special{pn 8}%
\special{pa 1120 400}%
\special{pa 800 400}%
\special{fp}%
%
\special{pn 8}%
\special{ar 1520 320 80 80 6.2831853 1.5707963}%
%
\special{pn 8}%
\special{ar 1680 480 80 80 3.1415927 4.7123890}%
%
\special{pn 8}%
\special{pa 1520 400}%
\special{pa 1280 400}%
\special{fp}%
%
\special{pn 8}%
\special{pa 1680 400}%
\special{pa 3300 400}%
\special{fp}%
\put(2.0000,-5.1000){\makebox(0,0)[lb]{$=A^2$}}%
%
\special{pn 8}%
\special{pa 2000 480}%
\special{pa 2000 440}%
\special{fp}%
%
\special{pn 8}%
\special{pa 2000 360}%
\special{pa 2000 320}%
\special{fp}%
%
\special{pn 8}%
\special{pa 2400 480}%
\special{pa 2400 440}%
\special{fp}%
%
\special{pn 8}%
\special{pa 2400 360}%
\special{pa 2400 320}%
\special{fp}%
%
\special{pn 8}%
\special{pa 3200 480}%
\special{pa 3200 440}%
\special{fp}%
%
\special{pn 8}%
\special{pa 3200 360}%
\special{pa 3200 320}%
\special{fp}%
%
\special{pn 8}%
\special{pa 1400 320}%
\special{pa 1400 480}%
\special{dt 0.045}%
%
\special{pn 8}%
\special{pa 1800 320}%
\special{pa 1800 480}%
\special{dt 0.045}%
%
\special{pn 8}%
\special{pa 2200 320}%
\special{pa 2200 480}%
\special{dt 0.045}%
%
\special{pn 8}%
\special{pa 3000 320}%
\special{pa 3000 480}%
\special{dt 0.045}%
%
\special{pn 13}%
\special{pa 700 200}%
\special{pa 600 460}%
\special{fp}%
\special{pa 600 460}%
\special{pa 700 720}%
\special{fp}%
%
\special{pn 8}%
\special{pa 1200 600}%
\special{pa 1200 480}%
\special{fp}%
%
\special{pn 8}%
\special{pa 3400 600}%
\special{pa 3400 520}%
\special{fp}%
%
\special{pn 13}%
\special{pa 3600 200}%
\special{pa 3700 460}%
\special{fp}%
\special{pa 3700 460}%
\special{pa 3600 720}%
\special{fp}%
\end{picture}}
& \hspace{6em} 
{\unitlength 0.1in%
\begin{picture}(33.0000,5.9200)(4.0000,-9.2000)%
%
\special{pn 8}%
\special{ar 1300 520 100 120 3.1415927 6.2831853}%
%
\special{pn 8}%
\special{ar 1500 680 100 120 6.2831853 3.1415927}%
%
\special{pn 8}%
\special{ar 1700 520 100 120 3.1415927 6.2831853}%
%
\special{pn 8}%
\special{ar 2100 520 100 120 3.1415927 6.2831853}%
%
\special{pn 8}%
\special{ar 1900 680 100 120 6.2831853 3.1415927}%
%
\special{pn 8}%
\special{ar 2300 680 100 120 6.2831853 3.1415927}%
%
\special{pn 8}%
\special{ar 2500 520 100 120 3.1415927 4.7123890}%
%
\special{pn 8}%
\special{ar 2900 520 100 120 4.7123890 6.2831853}%
%
\special{pn 8}%
\special{ar 3100 680 100 120 6.2831853 3.1415927}%
%
\special{pn 8}%
\special{ar 3300 520 100 120 3.1415927 4.7123890}%
%
\special{pn 8}%
\special{pa 3300 400}%
\special{pa 3500 400}%
\special{fp}%
%
\special{pn 4}%
\special{sh 1}%
\special{ar 800 600 16 16 0 6.2831853}%
%
\special{pn 8}%
\special{ar 1300 800 100 120 1.5707963 3.1415927}%
%
\special{pn 8}%
\special{ar 3300 800 100 120 6.2831853 1.5707963}%
%
\special{pn 8}%
\special{ar 3300 720 100 120 4.7123890 6.2831853}%
%
\special{pn 8}%
\special{pa 3300 920}%
\special{pa 1300 920}%
\special{fp}%
%
\special{pn 4}%
\special{sh 1}%
\special{ar 3500 400 16 16 0 6.2831853}%
%
\special{pn 4}%
\special{sh 1}%
\special{ar 3200 600 16 16 0 6.2831853}%
%
\special{pn 4}%
\special{sh 1}%
\special{ar 2400 600 16 16 0 6.2831853}%
%
\special{pn 4}%
\special{sh 1}%
\special{ar 2000 600 16 16 0 6.2831853}%
\put(26.0000,-4.8000){\makebox(0,0)[lb]{$\cdots$}}%
\put(32.3000,-5.6000){\makebox(0,0)[lb]{$d+2$}}%
\put(19.1000,-5.6000){\makebox(0,0)[lb]{$3$}}%
%
\special{pn 8}%
\special{ar 1120 520 80 80 6.2831853 1.5707963}%
%
\special{pn 8}%
\special{ar 1280 680 80 80 3.1415927 4.7123890}%
%
\special{pn 8}%
\special{pa 1120 600}%
\special{pa 800 600}%
\special{fp}%
%
\special{pn 8}%
\special{ar 1520 680 80 80 4.7123890 6.2831853}%
%
\special{pn 8}%
\special{ar 1680 520 80 80 1.5707963 3.1415927}%
%
\special{pn 8}%
\special{pa 1520 600}%
\special{pa 1280 600}%
\special{fp}%
%
\special{pn 8}%
\special{pa 1680 600}%
\special{pa 3300 600}%
\special{fp}%
\put(4.0000,-7.1000){\makebox(0,0)[lb]{$+$}}%
%
\special{pn 8}%
\special{pa 2000 680}%
\special{pa 2000 640}%
\special{fp}%
%
\special{pn 8}%
\special{pa 2000 560}%
\special{pa 2000 520}%
\special{fp}%
%
\special{pn 8}%
\special{pa 2400 680}%
\special{pa 2400 640}%
\special{fp}%
%
\special{pn 8}%
\special{pa 2400 560}%
\special{pa 2400 520}%
\special{fp}%
%
\special{pn 8}%
\special{pa 3200 680}%
\special{pa 3200 640}%
\special{fp}%
%
\special{pn 8}%
\special{pa 3200 560}%
\special{pa 3200 520}%
\special{fp}%
%
\special{pn 8}%
\special{pa 1400 520}%
\special{pa 1400 680}%
\special{dt 0.045}%
%
\special{pn 8}%
\special{pa 1800 520}%
\special{pa 1800 680}%
\special{dt 0.045}%
%
\special{pn 8}%
\special{pa 2200 520}%
\special{pa 2200 680}%
\special{dt 0.045}%
%
\special{pn 8}%
\special{pa 3000 520}%
\special{pa 3000 680}%
\special{dt 0.045}%
%
\special{pn 13}%
\special{pa 700 400}%
\special{pa 600 660}%
\special{fp}%
\special{pa 600 660}%
\special{pa 700 920}%
\special{fp}%
%
\special{pn 8}%
\special{pa 1200 800}%
\special{pa 1200 680}%
\special{fp}%
%
\special{pn 8}%
\special{pa 3400 800}%
\special{pa 3400 720}%
\special{fp}%
%
\special{pn 13}%
\special{pa 3600 400}%
\special{pa 3700 660}%
\special{fp}%
\special{pa 3700 660}%
\special{pa 3600 920}%
\special{fp}%
\end{picture}}
& \hspace{8em} 
{\unitlength 0.1in%
\begin{picture}(18.6000,2.8000)(1.0000,-8.4000)%
\put(1.0000,-7.7500){\makebox(0,0)[lb]{$+A^{-1}\cdot (-A^{-3})^{d+1}$}}%
%
\special{pn 8}%
\special{pa 1500 700}%
\special{pa 1900 700}%
\special{fp}%
%
\special{pn 4}%
\special{sh 1}%
\special{ar 1900 700 16 16 0 6.2831853}%
%
\special{pn 4}%
\special{sh 1}%
\special{ar 1500 700 16 16 0 6.2831853}%
%
\special{pn 13}%
\special{pa 1500 560}%
\special{pa 1440 700}%
\special{fp}%
\special{pa 1440 700}%
\special{pa 1500 840}%
\special{fp}%
%
\special{pn 13}%
\special{pa 1900 560}%
\special{pa 1960 700}%
\special{fp}%
\special{pa 1960 700}%
\special{pa 1900 840}%
\special{fp}%
\end{picture}}
& \hspace{4em} = A^2 \langle \gamma_d \rangle+(-A^{-3})^d+A^{-3d-4}.
\end{align*}

Now it is easy to see that $\langle \gamma_1 \rangle=-A^3$,
and the formula is proved by an inductive argument.
\if 0
Then, by the claim, we compute
\begin{align*}
\langle \gamma_{d+2} \rangle &=
A^2 \langle \gamma_d \rangle+A^{-3d-4}-A^{-3d} \\
&=A^2 \left( \sum_{i=1}^{d-1} (-1)^{i-1} A^{-3d-2+4i}-A^{d+2} \right)
+A^{-3d-4}-A^{-3d} \\
&=\sum_{i=-1}^{d-1} (-1)^{i-1} A^{-3d+4i}-A^{d+4} \\
&=\sum_{i=1}^{d+1} A^{-3(d+2)-2+4i}-A^{d+4}=q_{d+2}.
\end{align*}
\fi
\end{proof}

\begin{example}
\label{ex:even}
Let $d\ge 4$ be an even integer, and set
\[
C(d):=\{(1,d),(d+1,2d)\} \cup \{(2d-i,i+1)\}_{i=1}^{d-2}.
\]
Then
\[
\langle C(d) \rangle=
A^{-3d+4}-A^{-3d+8}
+2\left( \sum_{i=1}^{d-4} (-1)^{i-1} A^{-3d+8+4i} \right)
+A^{d-4}-A^d+A^{d+4}.
\]
In particular, $\spn \langle C(d) \rangle=(d+4)-(-3d+4)=4d$.
\end{example}

\begin{center}
\begin{figure}
{\unitlength 0.1in%
\begin{picture}(29.0000,7.2000)(2.0000,-9.6000)%
%
\special{pn 8}%
\special{ar 900 680 100 120 6.2831853 3.1415927}%
%
\special{pn 8}%
\special{ar 1100 520 100 120 3.1415927 6.2831853}%
%
\special{pn 8}%
\special{ar 1500 520 100 120 3.1415927 6.2831853}%
%
\special{pn 8}%
\special{ar 1300 680 100 120 6.2831853 3.1415927}%
%
\special{pn 8}%
\special{ar 1700 680 100 120 6.2831853 3.1415927}%
%
\special{pn 8}%
\special{ar 2300 520 100 120 3.1415927 4.7123890}%
%
\special{pn 8}%
\special{ar 2620 520 100 120 4.7123890 6.2831853}%
%
\special{pn 8}%
\special{ar 2820 680 100 120 6.2831853 3.1415927}%
%
\special{pn 4}%
\special{sh 1}%
\special{ar 200 600 16 16 0 6.2831853}%
%
\special{pn 8}%
\special{ar 700 800 100 120 1.5707963 3.1415927}%
%
\special{pn 8}%
\special{ar 3000 800 100 120 6.2831853 1.5707963}%
%
\special{pn 8}%
\special{ar 3000 720 100 120 4.7123890 6.2831853}%
%
\special{pn 8}%
\special{pa 1000 680}%
\special{pa 1000 520}%
\special{fp}%
%
\special{pn 8}%
\special{pa 1400 680}%
\special{pa 1400 520}%
\special{fp}%
%
\special{pn 8}%
\special{pa 2200 680}%
\special{pa 2200 520}%
\special{fp}%
%
\special{pn 4}%
\special{sh 1}%
\special{ar 2920 600 16 16 0 6.2831853}%
%
\special{pn 4}%
\special{sh 1}%
\special{ar 1800 600 16 16 0 6.2831853}%
%
\special{pn 4}%
\special{sh 1}%
\special{ar 1400 600 16 16 0 6.2831853}%
%
\special{pn 4}%
\special{sh 1}%
\special{ar 1000 600 16 16 0 6.2831853}%
%
\special{pn 4}%
\special{sh 1}%
\special{ar 600 600 16 16 0 6.2831853}%
\put(23.2000,-4.8000){\makebox(0,0)[lb]{$\cdots$}}%
%
\special{pn 8}%
\special{pa 400 600}%
\special{pa 320 560}%
\special{fp}%
\special{pa 400 600}%
\special{pa 320 640}%
\special{fp}%
%
\special{pn 8}%
\special{pa 1800 920}%
\special{pa 1880 880}%
\special{fp}%
\special{pa 1800 920}%
\special{pa 1880 960}%
\special{fp}%
\put(5.1000,-5.6000){\makebox(0,0)[lb]{$1$}}%
\put(9.1000,-5.6000){\makebox(0,0)[lb]{$2$}}%
\put(13.1000,-5.6000){\makebox(0,0)[lb]{$3$}}%
%
\special{pn 8}%
\special{ar 1900 520 100 120 3.1415927 6.2831853}%
%
\special{pn 8}%
\special{pa 1800 680}%
\special{pa 1800 520}%
\special{fp}%
\put(17.1000,-5.6000){\makebox(0,0)[lb]{$4$}}%
%
\special{pn 8}%
\special{ar 2100 680 100 120 6.2831853 3.1415927}%
%
\special{pn 4}%
\special{sh 1}%
\special{ar 2200 600 16 16 0 6.2831853}%
%
\special{pn 8}%
\special{pa 3000 920}%
\special{pa 700 920}%
\special{fp}%
%
\special{pn 8}%
\special{pa 3000 600}%
\special{pa 200 600}%
\special{fp}%
%
\special{pn 8}%
\special{ar 2820 400 100 120 4.7123890 6.2831853}%
%
\special{pn 8}%
\special{ar 700 400 100 120 3.1415927 4.7123890}%
%
\special{pn 8}%
\special{ar 700 520 100 120 4.7123890 6.2831853}%
%
\special{pn 4}%
\special{sh 1}%
\special{ar 200 400 16 16 0 6.2831853}%
%
\special{pn 8}%
\special{pa 2820 280}%
\special{pa 700 280}%
\special{fp}%
\put(29.5000,-5.6000){\makebox(0,0)[lb]{$d-1$}}%
%
\special{pn 8}%
\special{pa 320 400}%
\special{pa 400 360}%
\special{fp}%
\special{pa 320 400}%
\special{pa 400 440}%
\special{fp}%
%
\special{pn 8}%
\special{pa 1880 280}%
\special{pa 1800 240}%
\special{fp}%
\special{pa 1880 280}%
\special{pa 1800 320}%
\special{fp}%
%
\special{pn 4}%
\special{sh 1}%
\special{ar 600 400 16 16 0 6.2831853}%
%
\special{pn 8}%
\special{pa 700 400}%
\special{pa 200 400}%
\special{fp}%
%
\special{pn 8}%
\special{pa 800 520}%
\special{pa 800 680}%
\special{dt 0.045}%
%
\special{pn 8}%
\special{pa 1200 520}%
\special{pa 1200 680}%
\special{dt 0.045}%
%
\special{pn 8}%
\special{pa 1600 520}%
\special{pa 1600 680}%
\special{dt 0.045}%
%
\special{pn 8}%
\special{pa 2000 520}%
\special{pa 2000 680}%
\special{dt 0.045}%
%
\special{pn 8}%
\special{pa 2720 520}%
\special{pa 2720 680}%
\special{dt 0.045}%
%
\special{pn 8}%
\special{pa 3100 800}%
\special{pa 3100 720}%
\special{fp}%
%
\special{pn 8}%
\special{pa 600 800}%
\special{pa 600 400}%
\special{fp}%
%
\special{pn 8}%
\special{pa 2920 400}%
\special{pa 2920 680}%
\special{fp}%
\end{picture}}%
\caption{the curve $\gamma_d$ in Example \ref{ex:even}}
\label{fig:gamma_even}
\end{figure}
\end{center}

\begin{proof}
We have $C(d)=C_{\gamma_d}$, where $\gamma_d$ is the curve as shown in Figure \ref{fig:gamma_even}.
Let $d\ge 4$ be an even integer. Then
\if 0
\begin{equation}
\label{eq:ex_even}
\langle \gamma_{d+2} \rangle=A^2 \langle \gamma_d \rangle
+A^{-3d-2}-A^{-3d+2}+A^{-3d+6}-A^{-3d+10}.
\end{equation}
To prove this claim, we first compute
\fi
\begin{align}
\label{eq:ex2}
& \quad 
{\unitlength 0.1in%
\begin{picture}(40.0000,7.2000)(4.0000,-9.6000)%
%
\special{pn 8}%
\special{ar 1900 680 100 120 6.2831853 3.1415927}%
%
\special{pn 8}%
\special{ar 2100 520 100 120 3.1415927 6.2831853}%
%
\special{pn 8}%
\special{ar 2500 520 100 120 3.1415927 6.2831853}%
%
\special{pn 8}%
\special{ar 2300 680 100 120 6.2831853 3.1415927}%
%
\special{pn 8}%
\special{ar 2700 680 100 120 6.2831853 3.1415927}%
%
\special{pn 8}%
\special{ar 3300 520 100 120 3.1415927 4.7123890}%
%
\special{pn 8}%
\special{ar 3620 520 100 120 4.7123890 6.2831853}%
%
\special{pn 8}%
\special{ar 3820 680 100 120 6.2831853 3.1415927}%
%
\special{pn 4}%
\special{sh 1}%
\special{ar 1200 600 16 16 0 6.2831853}%
%
\special{pn 8}%
\special{ar 1700 800 100 120 1.5707963 3.1415927}%
%
\special{pn 8}%
\special{ar 4000 800 100 120 6.2831853 1.5707963}%
%
\special{pn 8}%
\special{ar 4000 720 100 120 4.7123890 6.2831853}%
%
\special{pn 8}%
\special{pa 2000 680}%
\special{pa 2000 520}%
\special{fp}%
%
\special{pn 8}%
\special{pa 2400 680}%
\special{pa 2400 520}%
\special{fp}%
%
\special{pn 8}%
\special{pa 3200 680}%
\special{pa 3200 520}%
\special{fp}%
%
\special{pn 4}%
\special{sh 1}%
\special{ar 3920 600 16 16 0 6.2831853}%
%
\special{pn 4}%
\special{sh 1}%
\special{ar 2800 600 16 16 0 6.2831853}%
%
\special{pn 4}%
\special{sh 1}%
\special{ar 2400 600 16 16 0 6.2831853}%
%
\special{pn 4}%
\special{sh 1}%
\special{ar 2000 600 16 16 0 6.2831853}%
%
\special{pn 4}%
\special{sh 1}%
\special{ar 1600 600 16 16 0 6.2831853}%
\put(33.2000,-4.8000){\makebox(0,0)[lb]{$\cdots$}}%
%
\special{pn 8}%
\special{pa 1400 600}%
\special{pa 1320 560}%
\special{fp}%
\special{pa 1400 600}%
\special{pa 1320 640}%
\special{fp}%
%
\special{pn 8}%
\special{pa 2800 920}%
\special{pa 2880 880}%
\special{fp}%
\special{pa 2800 920}%
\special{pa 2880 960}%
\special{fp}%
\put(15.1000,-5.6000){\makebox(0,0)[lb]{$1$}}%
\put(19.1000,-5.6000){\makebox(0,0)[lb]{$2$}}%
\put(23.1000,-5.6000){\makebox(0,0)[lb]{$3$}}%
%
\special{pn 8}%
\special{ar 2900 520 100 120 3.1415927 6.2831853}%
%
\special{pn 8}%
\special{pa 2800 680}%
\special{pa 2800 520}%
\special{fp}%
\put(27.1000,-5.6000){\makebox(0,0)[lb]{$3$}}%
%
\special{pn 8}%
\special{ar 3100 680 100 120 6.2831853 3.1415927}%
%
\special{pn 4}%
\special{sh 1}%
\special{ar 3200 600 16 16 0 6.2831853}%
%
\special{pn 8}%
\special{pa 4000 920}%
\special{pa 1700 920}%
\special{fp}%
%
\special{pn 8}%
\special{pa 4000 600}%
\special{pa 1200 600}%
\special{fp}%
%
\special{pn 8}%
\special{ar 3820 400 100 120 4.7123890 6.2831853}%
%
\special{pn 8}%
\special{ar 1700 400 100 120 3.1415927 4.7123890}%
%
\special{pn 8}%
\special{ar 1700 520 100 120 4.7123890 6.2831853}%
%
\special{pn 4}%
\special{sh 1}%
\special{ar 1200 400 16 16 0 6.2831853}%
%
\special{pn 8}%
\special{pa 3820 280}%
\special{pa 1700 280}%
\special{fp}%
\put(39.5000,-5.6000){\makebox(0,0)[lb]{$d+1$}}%
%
\special{pn 8}%
\special{pa 1320 400}%
\special{pa 1400 360}%
\special{fp}%
\special{pa 1320 400}%
\special{pa 1400 440}%
\special{fp}%
%
\special{pn 8}%
\special{pa 2880 280}%
\special{pa 2800 240}%
\special{fp}%
\special{pa 2880 280}%
\special{pa 2800 320}%
\special{fp}%
%
\special{pn 4}%
\special{sh 1}%
\special{ar 1600 400 16 16 0 6.2831853}%
%
\special{pn 8}%
\special{pa 1700 400}%
\special{pa 1200 400}%
\special{fp}%
\put(4.0000,-6.7500){\makebox(0,0)[lb]{$\langle \gamma_{d+2} \rangle=$}}%
%
\special{pn 8}%
\special{pa 1800 520}%
\special{pa 1800 680}%
\special{dt 0.045}%
%
\special{pn 8}%
\special{pa 2200 520}%
\special{pa 2200 680}%
\special{dt 0.045}%
%
\special{pn 8}%
\special{pa 2600 520}%
\special{pa 2600 680}%
\special{dt 0.045}%
%
\special{pn 8}%
\special{pa 3000 520}%
\special{pa 3000 680}%
\special{dt 0.045}%
%
\special{pn 8}%
\special{pa 3720 520}%
\special{pa 3720 680}%
\special{dt 0.045}%
%
\special{pn 8}%
\special{pa 1600 800}%
\special{pa 1600 400}%
\special{fp}%
%
\special{pn 8}%
\special{pa 4100 800}%
\special{pa 4100 720}%
\special{fp}%
%
\special{pn 8}%
\special{pa 3920 400}%
\special{pa 3920 680}%
\special{fp}%
%
\special{pn 13}%
\special{pa 4200 280}%
\special{pa 4400 600}%
\special{fp}%
\special{pa 4400 600}%
\special{pa 4200 920}%
\special{fp}%
%
\special{pn 13}%
\special{pa 1200 280}%
\special{pa 1000 600}%
\special{fp}%
\special{pa 1000 600}%
\special{pa 1200 920}%
\special{fp}%
\end{picture}}
& \hspace{4em} 
{\unitlength 0.1in%
\begin{picture}(38.0000,6.4000)(2.0000,-9.2000)%
%
\special{pn 8}%
\special{ar 1500 680 100 120 6.2831853 3.1415927}%
%
\special{pn 8}%
\special{ar 1700 520 100 120 3.1415927 6.2831853}%
%
\special{pn 8}%
\special{ar 2100 520 100 120 3.1415927 6.2831853}%
%
\special{pn 8}%
\special{ar 1900 680 100 120 6.2831853 3.1415927}%
%
\special{pn 8}%
\special{ar 2300 680 100 120 6.2831853 3.1415927}%
%
\special{pn 8}%
\special{ar 2900 520 100 120 3.1415927 4.7123890}%
%
\special{pn 8}%
\special{ar 3220 520 100 120 4.7123890 6.2831853}%
%
\special{pn 8}%
\special{ar 3420 680 100 120 6.2831853 3.1415927}%
%
\special{pn 4}%
\special{sh 1}%
\special{ar 800 600 16 16 0 6.2831853}%
%
\special{pn 8}%
\special{ar 1300 800 100 120 1.5707963 3.1415927}%
%
\special{pn 8}%
\special{ar 3600 800 100 120 6.2831853 1.5707963}%
%
\special{pn 8}%
\special{ar 3600 720 100 120 4.7123890 6.2831853}%
%
\special{pn 4}%
\special{sh 1}%
\special{ar 3520 600 16 16 0 6.2831853}%
%
\special{pn 4}%
\special{sh 1}%
\special{ar 2400 600 16 16 0 6.2831853}%
%
\special{pn 4}%
\special{sh 1}%
\special{ar 2000 600 16 16 0 6.2831853}%
%
\special{pn 4}%
\special{sh 1}%
\special{ar 1200 600 16 16 0 6.2831853}%
\put(29.2000,-4.8000){\makebox(0,0)[lb]{$\cdots$}}%
\put(11.1000,-5.6000){\makebox(0,0)[lb]{$1$}}%
\put(19.1000,-5.6000){\makebox(0,0)[lb]{$3$}}%
%
\special{pn 8}%
\special{ar 2500 520 100 120 3.1415927 6.2831853}%
\put(23.1000,-5.6000){\makebox(0,0)[lb]{$4$}}%
%
\special{pn 8}%
\special{ar 2700 680 100 120 6.2831853 3.1415927}%
%
\special{pn 4}%
\special{sh 1}%
\special{ar 2800 600 16 16 0 6.2831853}%
%
\special{pn 8}%
\special{pa 3600 920}%
\special{pa 1300 920}%
\special{fp}%
%
\special{pn 8}%
\special{ar 3420 400 100 120 4.7123890 6.2831853}%
%
\special{pn 8}%
\special{ar 1300 400 100 120 3.1415927 4.7123890}%
%
\special{pn 8}%
\special{ar 1300 520 100 120 4.7123890 6.2831853}%
%
\special{pn 8}%
\special{pa 1300 400}%
\special{pa 1240 400}%
\special{fp}%
%
\special{pn 8}%
\special{pa 1160 400}%
\special{pa 800 400}%
\special{fp}%
%
\special{pn 4}%
\special{sh 1}%
\special{ar 800 400 16 16 0 6.2831853}%
%
\special{pn 8}%
\special{pa 3420 280}%
\special{pa 1300 280}%
\special{fp}%
\put(35.5000,-5.6000){\makebox(0,0)[lb]{$d+1$}}%
%
\special{pn 8}%
\special{ar 1520 520 80 80 6.2831853 1.5707963}%
%
\special{pn 8}%
\special{ar 1680 680 80 80 3.1415927 4.7123890}%
%
\special{pn 8}%
\special{pa 1520 600}%
\special{pa 800 600}%
\special{fp}%
%
\special{pn 8}%
\special{pa 1680 600}%
\special{pa 3600 600}%
\special{fp}%
\put(2.0000,-6.5000){\makebox(0,0)[lb]{$=A$}}%
%
\special{pn 4}%
\special{sh 1}%
\special{ar 1200 400 16 16 0 6.2831853}%
%
\special{pn 8}%
\special{pa 2000 680}%
\special{pa 2000 640}%
\special{fp}%
%
\special{pn 8}%
\special{pa 2000 560}%
\special{pa 2000 520}%
\special{fp}%
%
\special{pn 8}%
\special{pa 2400 680}%
\special{pa 2400 640}%
\special{fp}%
%
\special{pn 8}%
\special{pa 2400 560}%
\special{pa 2400 520}%
\special{fp}%
%
\special{pn 8}%
\special{pa 2800 680}%
\special{pa 2800 640}%
\special{fp}%
%
\special{pn 8}%
\special{pa 2800 560}%
\special{pa 2800 520}%
\special{fp}%
%
\special{pn 8}%
\special{pa 3520 680}%
\special{pa 3520 640}%
\special{fp}%
%
\special{pn 8}%
\special{pa 1400 520}%
\special{pa 1400 680}%
\special{dt 0.045}%
%
\special{pn 8}%
\special{pa 1800 520}%
\special{pa 1800 680}%
\special{dt 0.045}%
%
\special{pn 8}%
\special{pa 2200 520}%
\special{pa 2200 680}%
\special{dt 0.045}%
%
\special{pn 8}%
\special{pa 2600 520}%
\special{pa 2600 680}%
\special{dt 0.045}%
%
\special{pn 8}%
\special{pa 3320 520}%
\special{pa 3320 680}%
\special{dt 0.045}%
%
\special{pn 13}%
\special{pa 800 280}%
\special{pa 600 600}%
\special{fp}%
\special{pa 600 600}%
\special{pa 800 920}%
\special{fp}%
%
\special{pn 8}%
\special{pa 1200 400}%
\special{pa 1200 560}%
\special{fp}%
%
\special{pn 8}%
\special{pa 1200 800}%
\special{pa 1200 640}%
\special{fp}%
%
\special{pn 8}%
\special{pa 3700 800}%
\special{pa 3700 720}%
\special{fp}%
%
\special{pn 8}%
\special{pa 3520 400}%
\special{pa 3520 560}%
\special{fp}%
%
\special{pn 13}%
\special{pa 3800 280}%
\special{pa 4000 600}%
\special{fp}%
\special{pa 4000 600}%
\special{pa 3800 920}%
\special{fp}%
\end{picture}}
& \hspace{6em} 
{\unitlength 0.1in%
\begin{picture}(38.0000,6.4000)(2.0000,-9.2000)%
%
\special{pn 8}%
\special{ar 1500 680 100 120 6.2831853 3.1415927}%
%
\special{pn 8}%
\special{ar 1700 520 100 120 3.1415927 6.2831853}%
%
\special{pn 8}%
\special{ar 2100 520 100 120 3.1415927 6.2831853}%
%
\special{pn 8}%
\special{ar 1900 680 100 120 6.2831853 3.1415927}%
%
\special{pn 8}%
\special{ar 2300 680 100 120 6.2831853 3.1415927}%
%
\special{pn 8}%
\special{ar 2900 520 100 120 3.1415927 4.7123890}%
%
\special{pn 8}%
\special{ar 3220 520 100 120 4.7123890 6.2831853}%
%
\special{pn 8}%
\special{ar 3420 680 100 120 6.2831853 3.1415927}%
%
\special{pn 4}%
\special{sh 1}%
\special{ar 800 600 16 16 0 6.2831853}%
%
\special{pn 8}%
\special{ar 1300 800 100 120 1.5707963 3.1415927}%
%
\special{pn 8}%
\special{ar 3600 800 100 120 6.2831853 1.5707963}%
%
\special{pn 8}%
\special{ar 3600 720 100 120 4.7123890 6.2831853}%
%
\special{pn 4}%
\special{sh 1}%
\special{ar 3520 600 16 16 0 6.2831853}%
%
\special{pn 4}%
\special{sh 1}%
\special{ar 2400 600 16 16 0 6.2831853}%
%
\special{pn 4}%
\special{sh 1}%
\special{ar 2000 600 16 16 0 6.2831853}%
%
\special{pn 4}%
\special{sh 1}%
\special{ar 1200 600 16 16 0 6.2831853}%
\put(29.2000,-4.8000){\makebox(0,0)[lb]{$\cdots$}}%
\put(11.1000,-5.6000){\makebox(0,0)[lb]{$1$}}%
\put(19.1000,-5.6000){\makebox(0,0)[lb]{$3$}}%
%
\special{pn 8}%
\special{ar 2500 520 100 120 3.1415927 6.2831853}%
\put(23.1000,-5.6000){\makebox(0,0)[lb]{$4$}}%
%
\special{pn 8}%
\special{ar 2700 680 100 120 6.2831853 3.1415927}%
%
\special{pn 4}%
\special{sh 1}%
\special{ar 2800 600 16 16 0 6.2831853}%
%
\special{pn 8}%
\special{pa 3600 920}%
\special{pa 1300 920}%
\special{fp}%
%
\special{pn 8}%
\special{ar 3420 400 100 120 4.7123890 6.2831853}%
%
\special{pn 8}%
\special{ar 1300 400 100 120 3.1415927 4.7123890}%
%
\special{pn 8}%
\special{ar 1300 520 100 120 4.7123890 6.2831853}%
%
\special{pn 8}%
\special{pa 1300 400}%
\special{pa 1240 400}%
\special{fp}%
%
\special{pn 8}%
\special{pa 1160 400}%
\special{pa 800 400}%
\special{fp}%
%
\special{pn 4}%
\special{sh 1}%
\special{ar 800 400 16 16 0 6.2831853}%
%
\special{pn 8}%
\special{pa 3420 280}%
\special{pa 1300 280}%
\special{fp}%
\put(35.5000,-5.6000){\makebox(0,0)[lb]{$d+1$}}%
%
\special{pn 8}%
\special{pa 1520 600}%
\special{pa 800 600}%
\special{fp}%
%
\special{pn 8}%
\special{pa 1680 600}%
\special{pa 3600 600}%
\special{fp}%
%
\special{pn 8}%
\special{ar 1680 520 80 80 1.5707963 3.1415927}%
%
\special{pn 8}%
\special{ar 1520 680 80 80 4.7123890 6.2831853}%
\put(2.0000,-6.5000){\makebox(0,0)[lb]{$+A^{-1}$}}%
%
\special{pn 4}%
\special{sh 1}%
\special{ar 1200 400 16 16 0 6.2831853}%
%
\special{pn 8}%
\special{pa 2000 680}%
\special{pa 2000 640}%
\special{fp}%
%
\special{pn 8}%
\special{pa 2000 560}%
\special{pa 2000 520}%
\special{fp}%
%
\special{pn 8}%
\special{pa 2400 520}%
\special{pa 2400 560}%
\special{fp}%
%
\special{pn 8}%
\special{pa 2400 640}%
\special{pa 2400 680}%
\special{fp}%
%
\special{pn 8}%
\special{pa 2800 680}%
\special{pa 2800 640}%
\special{fp}%
%
\special{pn 8}%
\special{pa 2800 560}%
\special{pa 2800 520}%
\special{fp}%
%
\special{pn 8}%
\special{pa 3520 680}%
\special{pa 3520 640}%
\special{fp}%
%
\special{pn 8}%
\special{pa 1400 520}%
\special{pa 1400 680}%
\special{dt 0.045}%
%
\special{pn 8}%
\special{pa 1800 520}%
\special{pa 1800 680}%
\special{dt 0.045}%
%
\special{pn 8}%
\special{pa 2200 520}%
\special{pa 2200 680}%
\special{dt 0.045}%
%
\special{pn 8}%
\special{pa 2600 520}%
\special{pa 2600 680}%
\special{dt 0.045}%
%
\special{pn 8}%
\special{pa 3320 520}%
\special{pa 3320 680}%
\special{dt 0.045}%
%
\special{pn 13}%
\special{pa 800 280}%
\special{pa 600 600}%
\special{fp}%
\special{pa 600 600}%
\special{pa 800 920}%
\special{fp}%
%
\special{pn 8}%
\special{pa 1200 400}%
\special{pa 1200 560}%
\special{fp}%
%
\special{pn 8}%
\special{pa 1200 640}%
\special{pa 1200 800}%
\special{fp}%
%
\special{pn 8}%
\special{pa 3700 800}%
\special{pa 3700 720}%
\special{fp}%
%
\special{pn 8}%
\special{pa 3520 400}%
\special{pa 3520 560}%
\special{fp}%
%
\special{pn 13}%
\special{pa 3800 280}%
\special{pa 4000 600}%
\special{fp}%
\special{pa 4000 600}%
\special{pa 3800 920}%
\special{fp}%
\end{picture}}
\end{align}
The diagram in the first term of (\ref{eq:ex2}) can be expanded as
\begin{align*}
& \quad 
{\unitlength 0.1in%
\begin{picture}(36.0000,6.4000)(4.0000,-9.2000)%
%
\special{pn 8}%
\special{ar 1500 680 100 120 6.2831853 3.1415927}%
%
\special{pn 8}%
\special{ar 1700 520 100 120 3.1415927 6.2831853}%
%
\special{pn 8}%
\special{ar 2100 520 100 120 3.1415927 6.2831853}%
%
\special{pn 8}%
\special{ar 1900 680 100 120 6.2831853 3.1415927}%
%
\special{pn 8}%
\special{ar 2300 680 100 120 6.2831853 3.1415927}%
%
\special{pn 8}%
\special{ar 2900 520 100 120 3.1415927 4.7123890}%
%
\special{pn 8}%
\special{ar 3220 520 100 120 4.7123890 6.2831853}%
%
\special{pn 8}%
\special{ar 3420 680 100 120 6.2831853 3.1415927}%
%
\special{pn 4}%
\special{sh 1}%
\special{ar 800 600 16 16 0 6.2831853}%
%
\special{pn 8}%
\special{ar 1300 800 100 120 1.5707963 3.1415927}%
%
\special{pn 8}%
\special{ar 3600 800 100 120 6.2831853 1.5707963}%
%
\special{pn 8}%
\special{ar 3600 720 100 120 4.7123890 6.2831853}%
%
\special{pn 4}%
\special{sh 1}%
\special{ar 3520 600 16 16 0 6.2831853}%
%
\special{pn 4}%
\special{sh 1}%
\special{ar 2400 600 16 16 0 6.2831853}%
%
\special{pn 4}%
\special{sh 1}%
\special{ar 1200 600 16 16 0 6.2831853}%
\put(29.2000,-4.8000){\makebox(0,0)[lb]{$\cdots$}}%
\put(11.1000,-5.6000){\makebox(0,0)[lb]{$1$}}%
%
\special{pn 8}%
\special{ar 2500 520 100 120 3.1415927 6.2831853}%
\put(23.1000,-5.6000){\makebox(0,0)[lb]{$4$}}%
%
\special{pn 8}%
\special{ar 2700 680 100 120 6.2831853 3.1415927}%
%
\special{pn 4}%
\special{sh 1}%
\special{ar 2800 600 16 16 0 6.2831853}%
%
\special{pn 8}%
\special{pa 3600 920}%
\special{pa 1300 920}%
\special{fp}%
%
\special{pn 8}%
\special{ar 3420 400 100 120 4.7123890 6.2831853}%
%
\special{pn 8}%
\special{ar 1300 400 100 120 3.1415927 4.7123890}%
%
\special{pn 8}%
\special{ar 1300 520 100 120 4.7123890 6.2831853}%
%
\special{pn 8}%
\special{pa 1300 400}%
\special{pa 1240 400}%
\special{fp}%
%
\special{pn 8}%
\special{pa 1160 400}%
\special{pa 800 400}%
\special{fp}%
%
\special{pn 4}%
\special{sh 1}%
\special{ar 800 400 16 16 0 6.2831853}%
%
\special{pn 8}%
\special{pa 3420 280}%
\special{pa 1300 280}%
\special{fp}%
\put(35.5000,-5.6000){\makebox(0,0)[lb]{$d+1$}}%
%
\special{pn 8}%
\special{ar 1520 520 80 80 6.2831853 1.5707963}%
%
\special{pn 8}%
\special{ar 1680 680 80 80 3.1415927 4.7123890}%
%
\special{pn 8}%
\special{pa 1520 600}%
\special{pa 800 600}%
\special{fp}%
\put(4.0000,-6.5000){\makebox(0,0)[lb]{$A$}}%
%
\special{pn 8}%
\special{ar 1920 520 80 80 6.2831853 1.5707963}%
%
\special{pn 8}%
\special{ar 2080 680 80 80 3.1415927 4.7123890}%
%
\special{pn 8}%
\special{pa 1680 600}%
\special{pa 1920 600}%
\special{fp}%
%
\special{pn 8}%
\special{pa 2080 600}%
\special{pa 3600 600}%
\special{fp}%
%
\special{pn 4}%
\special{sh 1}%
\special{ar 1200 400 16 16 0 6.2831853}%
%
\special{pn 8}%
\special{pa 2400 680}%
\special{pa 2400 640}%
\special{fp}%
%
\special{pn 8}%
\special{pa 2400 560}%
\special{pa 2400 520}%
\special{fp}%
%
\special{pn 8}%
\special{pa 2800 520}%
\special{pa 2800 560}%
\special{fp}%
%
\special{pn 8}%
\special{pa 2800 640}%
\special{pa 2800 680}%
\special{fp}%
%
\special{pn 8}%
\special{pa 3520 680}%
\special{pa 3520 640}%
\special{fp}%
%
\special{pn 8}%
\special{pa 1400 520}%
\special{pa 1400 680}%
\special{dt 0.045}%
%
\special{pn 8}%
\special{pa 1800 520}%
\special{pa 1800 680}%
\special{dt 0.045}%
%
\special{pn 8}%
\special{pa 2200 520}%
\special{pa 2200 680}%
\special{dt 0.045}%
%
\special{pn 8}%
\special{pa 2600 520}%
\special{pa 2600 680}%
\special{dt 0.045}%
%
\special{pn 8}%
\special{pa 3320 520}%
\special{pa 3320 680}%
\special{dt 0.045}%
%
\special{pn 13}%
\special{pa 800 280}%
\special{pa 600 600}%
\special{fp}%
\special{pa 600 600}%
\special{pa 800 920}%
\special{fp}%
%
\special{pn 8}%
\special{pa 1200 400}%
\special{pa 1200 560}%
\special{fp}%
%
\special{pn 8}%
\special{pa 1200 640}%
\special{pa 1200 800}%
\special{fp}%
%
\special{pn 8}%
\special{pa 3520 400}%
\special{pa 3520 560}%
\special{fp}%
%
\special{pn 8}%
\special{pa 3700 800}%
\special{pa 3700 720}%
\special{fp}%
%
\special{pn 13}%
\special{pa 3800 280}%
\special{pa 4000 600}%
\special{fp}%
\special{pa 4000 600}%
\special{pa 3800 920}%
\special{fp}%
\end{picture}}
& \hspace{2em} \quad 
{\unitlength 0.1in%
\begin{picture}(38.0000,6.4000)(2.0000,-9.2000)%
%
\special{pn 8}%
\special{ar 1500 680 100 120 6.2831853 3.1415927}%
%
\special{pn 8}%
\special{ar 1700 520 100 120 3.1415927 6.2831853}%
%
\special{pn 8}%
\special{ar 2100 520 100 120 3.1415927 6.2831853}%
%
\special{pn 8}%
\special{ar 1900 680 100 120 6.2831853 3.1415927}%
%
\special{pn 8}%
\special{ar 2300 680 100 120 6.2831853 3.1415927}%
%
\special{pn 8}%
\special{ar 2900 520 100 120 3.1415927 4.7123890}%
%
\special{pn 8}%
\special{ar 3220 520 100 120 4.7123890 6.2831853}%
%
\special{pn 8}%
\special{ar 3420 680 100 120 6.2831853 3.1415927}%
%
\special{pn 4}%
\special{sh 1}%
\special{ar 800 600 16 16 0 6.2831853}%
%
\special{pn 8}%
\special{ar 1300 800 100 120 1.5707963 3.1415927}%
%
\special{pn 8}%
\special{ar 3600 800 100 120 6.2831853 1.5707963}%
%
\special{pn 8}%
\special{ar 3600 720 100 120 4.7123890 6.2831853}%
%
\special{pn 4}%
\special{sh 1}%
\special{ar 3520 600 16 16 0 6.2831853}%
%
\special{pn 4}%
\special{sh 1}%
\special{ar 2400 600 16 16 0 6.2831853}%
%
\special{pn 4}%
\special{sh 1}%
\special{ar 1200 600 16 16 0 6.2831853}%
\put(29.2000,-4.8000){\makebox(0,0)[lb]{$\cdots$}}%
\put(11.1000,-5.6000){\makebox(0,0)[lb]{$1$}}%
%
\special{pn 8}%
\special{ar 2500 520 100 120 3.1415927 6.2831853}%
\put(23.1000,-5.6000){\makebox(0,0)[lb]{$4$}}%
%
\special{pn 8}%
\special{ar 2700 680 100 120 6.2831853 3.1415927}%
%
\special{pn 4}%
\special{sh 1}%
\special{ar 2800 600 16 16 0 6.2831853}%
%
\special{pn 8}%
\special{pa 3600 920}%
\special{pa 1300 920}%
\special{fp}%
%
\special{pn 8}%
\special{ar 3420 400 100 120 4.7123890 6.2831853}%
%
\special{pn 8}%
\special{ar 1300 400 100 120 3.1415927 4.7123890}%
%
\special{pn 8}%
\special{ar 1300 520 100 120 4.7123890 6.2831853}%
%
\special{pn 8}%
\special{pa 1300 400}%
\special{pa 1240 400}%
\special{fp}%
%
\special{pn 8}%
\special{pa 1160 400}%
\special{pa 800 400}%
\special{fp}%
%
\special{pn 4}%
\special{sh 1}%
\special{ar 800 400 16 16 0 6.2831853}%
%
\special{pn 8}%
\special{pa 3420 280}%
\special{pa 1300 280}%
\special{fp}%
\put(35.5000,-5.6000){\makebox(0,0)[lb]{$d+1$}}%
%
\special{pn 8}%
\special{ar 1520 520 80 80 6.2831853 1.5707963}%
%
\special{pn 8}%
\special{ar 1680 680 80 80 3.1415927 4.7123890}%
%
\special{pn 8}%
\special{pa 1520 600}%
\special{pa 800 600}%
\special{fp}%
\put(2.0000,-6.5000){\makebox(0,0)[lb]{$+A^{-1}$}}%
%
\special{pn 8}%
\special{pa 1680 600}%
\special{pa 1920 600}%
\special{fp}%
%
\special{pn 8}%
\special{pa 2080 600}%
\special{pa 3600 600}%
\special{fp}%
%
\special{pn 8}%
\special{ar 2080 520 80 80 1.5707963 3.1415927}%
%
\special{pn 8}%
\special{ar 1920 680 80 80 4.7123890 6.2831853}%
%
\special{pn 4}%
\special{sh 1}%
\special{ar 1200 400 16 16 0 6.2831853}%
%
\special{pn 8}%
\special{pa 2400 680}%
\special{pa 2400 640}%
\special{fp}%
%
\special{pn 8}%
\special{pa 2400 560}%
\special{pa 2400 520}%
\special{fp}%
%
\special{pn 8}%
\special{pa 2800 680}%
\special{pa 2800 640}%
\special{fp}%
%
\special{pn 8}%
\special{pa 2800 560}%
\special{pa 2800 520}%
\special{fp}%
%
\special{pn 8}%
\special{pa 3520 680}%
\special{pa 3520 640}%
\special{fp}%
%
\special{pn 8}%
\special{pa 1400 520}%
\special{pa 1400 680}%
\special{dt 0.045}%
%
\special{pn 8}%
\special{pa 1800 520}%
\special{pa 1800 680}%
\special{dt 0.045}%
%
\special{pn 8}%
\special{pa 2200 520}%
\special{pa 2200 680}%
\special{dt 0.045}%
%
\special{pn 8}%
\special{pa 2600 520}%
\special{pa 2600 680}%
\special{dt 0.045}%
%
\special{pn 8}%
\special{pa 3320 520}%
\special{pa 3320 680}%
\special{dt 0.045}%
%
\special{pn 13}%
\special{pa 800 280}%
\special{pa 600 600}%
\special{fp}%
\special{pa 600 600}%
\special{pa 800 920}%
\special{fp}%
%
\special{pn 8}%
\special{pa 1200 400}%
\special{pa 1200 560}%
\special{fp}%
%
\special{pn 8}%
\special{pa 1200 640}%
\special{pa 1200 800}%
\special{fp}%
%
\special{pn 8}%
\special{pa 3520 400}%
\special{pa 3520 560}%
\special{fp}%
%
\special{pn 8}%
\special{pa 3700 800}%
\special{pa 3700 720}%
\special{fp}%
%
\special{pn 13}%
\special{pa 3800 280}%
\special{pa 4000 600}%
\special{fp}%
\special{pa 4000 600}%
\special{pa 3800 920}%
\special{fp}%
\end{picture}}
& 
{\unitlength 0.1in%
\begin{picture}(27.8000,3.6000)(4.8000,-6.2000)%
%
\special{pn 4}%
\special{sh 1}%
\special{ar 2400 500 16 16 0 6.2831853}%
%
\special{pn 8}%
\special{ar 2900 530 100 90 1.5707963 3.1415927}%
%
\special{pn 4}%
\special{sh 1}%
\special{ar 2800 500 16 16 0 6.2831853}%
%
\special{pn 8}%
\special{ar 3020 350 100 90 4.7123890 6.2831853}%
%
\special{pn 8}%
\special{ar 2900 350 100 90 3.1415927 4.7123890}%
%
\special{pn 8}%
\special{pa 2900 350}%
\special{pa 2840 350}%
\special{fp}%
%
\special{pn 8}%
\special{pa 2760 350}%
\special{pa 2400 350}%
\special{fp}%
%
\special{pn 4}%
\special{sh 1}%
\special{ar 2400 350 16 16 0 6.2831853}%
\put(4.8000,-4.8100){\makebox(0,0)[lb]{$=A \langle \gamma_d \rangle+A^{-1}\cdot (-A^{-3})^{d-2}$}}%
%
\special{pn 8}%
\special{ar 2900 425 80 75 4.7123890 1.5707963}%
%
\special{pn 8}%
\special{pa 2900 500}%
\special{pa 2400 500}%
\special{fp}%
%
\special{pn 8}%
\special{ar 3020 530 100 90 6.2831853 1.5707963}%
%
\special{pn 8}%
\special{pa 3020 620}%
\special{pa 2900 620}%
\special{fp}%
%
\special{pn 8}%
\special{pa 2900 260}%
\special{pa 3020 260}%
\special{fp}%
%
\special{pn 4}%
\special{sh 1}%
\special{ar 2800 350 16 16 0 6.2831853}%
%
\special{pn 8}%
\special{pa 2800 350}%
\special{pa 2800 470}%
\special{fp}%
%
\special{pn 8}%
\special{pa 3120 350}%
\special{pa 3120 530}%
\special{fp}%
%
\special{pn 13}%
\special{pa 2400 260}%
\special{pa 2300 440}%
\special{fp}%
\special{pa 2300 440}%
\special{pa 2400 620}%
\special{fp}%
%
\special{pn 13}%
\special{pa 3160 260}%
\special{pa 3260 440}%
\special{fp}%
\special{pa 3260 440}%
\special{pa 3160 620}%
\special{fp}%
\end{picture}}%
\end{align*}
On the other hand, the second term of (\ref{eq:ex2}) is equal to
\[
{\unitlength 0.1in%
\begin{picture}(21.0000,3.6000)(2.0000,-4.6000)%
\put(2.0000,-3.3800){\makebox(0,0)[lb]{$A^{-1}\cdot (-A^{-3})^{d-1}$}}%
%
\special{pn 8}%
\special{pa 1940 100}%
\special{pa 2060 100}%
\special{fp}%
%
\special{pn 4}%
\special{sh 1}%
\special{ar 1440 340 16 16 0 6.2831853}%
%
\special{pn 8}%
\special{ar 1940 370 100 90 1.5707963 3.1415927}%
%
\special{pn 4}%
\special{sh 1}%
\special{ar 1840 340 16 16 0 6.2831853}%
%
\special{pn 8}%
\special{ar 2060 190 100 90 4.7123890 6.2831853}%
%
\special{pn 8}%
\special{ar 1940 190 100 90 3.1415927 4.7123890}%
%
\special{pn 8}%
\special{pa 1940 190}%
\special{pa 1880 190}%
\special{fp}%
%
\special{pn 8}%
\special{pa 1800 190}%
\special{pa 1440 190}%
\special{fp}%
%
\special{pn 4}%
\special{sh 1}%
\special{ar 1440 190 16 16 0 6.2831853}%
%
\special{pn 8}%
\special{ar 1940 265 80 75 4.7123890 1.5707963}%
%
\special{pn 8}%
\special{pa 1940 340}%
\special{pa 1440 340}%
\special{fp}%
%
\special{pn 8}%
\special{ar 2060 370 100 90 6.2831853 1.5707963}%
%
\special{pn 8}%
\special{pa 2060 460}%
\special{pa 1940 460}%
\special{fp}%
%
\special{pn 4}%
\special{sh 1}%
\special{ar 1840 190 16 16 0 6.2831853}%
%
\special{pn 8}%
\special{pa 1840 190}%
\special{pa 1840 310}%
\special{fp}%
%
\special{pn 8}%
\special{pa 2160 190}%
\special{pa 2160 370}%
\special{fp}%
%
\special{pn 13}%
\special{pa 1440 100}%
\special{pa 1340 280}%
\special{fp}%
\special{pa 1340 280}%
\special{pa 1440 460}%
\special{fp}%
%
\special{pn 13}%
\special{pa 2200 100}%
\special{pa 2300 280}%
\special{fp}%
\special{pa 2300 280}%
\special{pa 2200 460}%
\special{fp}%
\end{picture}}%
\]
Moreover, we compute
\begin{align*}
& 
{\unitlength 0.1in%
\begin{picture}(39.6000,3.6000)(3.6000,-6.0000)%
%
\special{pn 4}%
\special{sh 1}%
\special{ar 1840 500 16 16 0 6.2831853}%
%
\special{pn 8}%
\special{ar 2460 330 100 90 4.7123890 6.2831853}%
%
\special{pn 8}%
\special{ar 2340 330 100 90 3.1415927 4.7123890}%
%
\special{pn 8}%
\special{pa 2340 350}%
\special{pa 2280 350}%
\special{fp}%
%
\special{pn 8}%
\special{pa 2200 350}%
\special{pa 1840 350}%
\special{fp}%
%
\special{pn 4}%
\special{sh 1}%
\special{ar 1840 350 16 16 0 6.2831853}%
%
\special{pn 8}%
\special{ar 2340 425 80 75 4.7123890 1.5707963}%
%
\special{pn 8}%
\special{ar 2460 510 100 90 6.2831853 1.5707963}%
%
\special{pn 8}%
\special{pa 2460 600}%
\special{pa 2340 600}%
\special{fp}%
%
\special{pn 8}%
\special{pa 2340 240}%
\special{pa 2460 240}%
\special{fp}%
%
\special{pn 4}%
\special{sh 1}%
\special{ar 2240 350 16 16 0 6.2831853}%
%
\special{pn 8}%
\special{ar 2320 560 80 60 3.1415927 4.7123890}%
%
\special{pn 8}%
\special{pa 2320 500}%
\special{pa 2340 500}%
\special{fp}%
%
\special{pn 8}%
\special{ar 2160 440 80 60 6.2831853 1.5707963}%
%
\special{pn 8}%
\special{pa 2160 500}%
\special{pa 1840 500}%
\special{fp}%
%
\special{pn 4}%
\special{sh 1}%
\special{ar 3460 500 16 16 0 6.2831853}%
%
\special{pn 8}%
\special{ar 4080 330 100 90 4.7123890 6.2831853}%
%
\special{pn 8}%
\special{ar 3960 330 100 90 3.1415927 4.7123890}%
%
\special{pn 8}%
\special{pa 3960 350}%
\special{pa 3900 350}%
\special{fp}%
%
\special{pn 8}%
\special{pa 3820 350}%
\special{pa 3460 350}%
\special{fp}%
%
\special{pn 4}%
\special{sh 1}%
\special{ar 3460 350 16 16 0 6.2831853}%
%
\special{pn 8}%
\special{ar 3960 425 80 75 4.7123890 1.5707963}%
%
\special{pn 8}%
\special{ar 4080 510 100 90 6.2831853 1.5707963}%
%
\special{pn 8}%
\special{pa 4080 600}%
\special{pa 3960 600}%
\special{fp}%
%
\special{pn 8}%
\special{pa 3960 240}%
\special{pa 4080 240}%
\special{fp}%
%
\special{pn 4}%
\special{sh 1}%
\special{ar 3860 350 16 16 0 6.2831853}%
%
\special{pn 8}%
\special{pa 3940 500}%
\special{pa 3960 500}%
\special{fp}%
%
\special{pn 8}%
\special{pa 3780 500}%
\special{pa 3460 500}%
\special{fp}%
%
\special{pn 8}%
\special{ar 3780 560 80 60 4.7123890 6.2831853}%
%
\special{pn 8}%
\special{ar 3940 440 80 60 1.5707963 3.1415927}%
\put(14.0000,-4.6200){\makebox(0,0)[lb]{$=A$}}%
\put(28.2000,-4.6200){\makebox(0,0)[lb]{$+\quad A^{-1}$}}%
%
\special{pn 4}%
\special{sh 1}%
\special{ar 460 480 16 16 0 6.2831853}%
%
\special{pn 8}%
\special{ar 960 510 100 90 1.5707963 3.1415927}%
%
\special{pn 4}%
\special{sh 1}%
\special{ar 860 480 16 16 0 6.2831853}%
%
\special{pn 8}%
\special{ar 1080 330 100 90 4.7123890 6.2831853}%
%
\special{pn 8}%
\special{ar 960 330 100 90 3.1415927 4.7123890}%
%
\special{pn 8}%
\special{pa 960 330}%
\special{pa 900 330}%
\special{fp}%
%
\special{pn 8}%
\special{pa 820 330}%
\special{pa 460 330}%
\special{fp}%
%
\special{pn 4}%
\special{sh 1}%
\special{ar 460 330 16 16 0 6.2831853}%
%
\special{pn 8}%
\special{ar 960 405 80 75 4.7123890 1.5707963}%
%
\special{pn 8}%
\special{pa 960 480}%
\special{pa 460 480}%
\special{fp}%
%
\special{pn 8}%
\special{ar 1080 510 100 90 6.2831853 1.5707963}%
%
\special{pn 8}%
\special{pa 1080 600}%
\special{pa 960 600}%
\special{fp}%
%
\special{pn 4}%
\special{sh 1}%
\special{ar 860 330 16 16 0 6.2831853}%
%
\special{pn 8}%
\special{pa 860 330}%
\special{pa 860 450}%
\special{fp}%
%
\special{pn 8}%
\special{pa 1180 330}%
\special{pa 1180 510}%
\special{fp}%
%
\special{pn 13}%
\special{pa 460 240}%
\special{pa 360 420}%
\special{fp}%
\special{pa 360 420}%
\special{pa 460 600}%
\special{fp}%
%
\special{pn 13}%
\special{pa 1220 240}%
\special{pa 1320 420}%
\special{fp}%
\special{pa 1320 420}%
\special{pa 1220 600}%
\special{fp}%
%
\special{pn 8}%
\special{pa 960 240}%
\special{pa 1080 240}%
\special{fp}%
%
\special{pn 13}%
\special{pa 1840 240}%
\special{pa 1740 420}%
\special{fp}%
\special{pa 1740 420}%
\special{pa 1840 600}%
\special{fp}%
%
\special{pn 13}%
\special{pa 2600 240}%
\special{pa 2700 420}%
\special{fp}%
\special{pa 2700 420}%
\special{pa 2600 600}%
\special{fp}%
%
\special{pn 8}%
\special{ar 2340 560 100 40 1.5707963 3.1415927}%
%
\special{pn 8}%
\special{pa 2560 510}%
\special{pa 2560 330}%
\special{fp}%
%
\special{pn 8}%
\special{pa 2240 330}%
\special{pa 2240 440}%
\special{fp}%
%
\special{pn 13}%
\special{pa 3460 240}%
\special{pa 3360 420}%
\special{fp}%
\special{pa 3360 420}%
\special{pa 3460 600}%
\special{fp}%
%
\special{pn 13}%
\special{pa 4220 240}%
\special{pa 4320 420}%
\special{fp}%
\special{pa 4320 420}%
\special{pa 4220 600}%
\special{fp}%
%
\special{pn 8}%
\special{pa 3860 330}%
\special{pa 3860 440}%
\special{fp}%
%
\special{pn 8}%
\special{ar 3780 560 80 60 4.7123890 6.2831853}%
%
\special{pn 8}%
\special{ar 3960 560 100 40 1.5707963 3.1415927}%
%
\special{pn 8}%
\special{pa 4180 510}%
\special{pa 4180 330}%
\special{fp}%
\end{picture}}
=& A(-A^3)+A^{-1}(-A^{-3})=-A^{-4}-A^4.
\end{align*}
Therefore, we obtain
\begin{align*}
\langle \gamma_{d+2} \rangle
=& A^2 \langle \gamma_d \rangle+((-A^{-3})^{d-2}+A^{-1}\cdot (-A^{-3})^{d-1})
(-A^{-4}-A^4) \\
=& A^2 \langle \gamma_d \rangle+
A^{-3d-2}-A^{-3d+2}+A^{-3d+6}-A^{-3d+10}.
\end{align*}

Now, by a direct computation, we see that
$\langle \gamma_4 \rangle=A^{-8}-A^{-4}+1-A^4+A^8$,
and the formula is proved by an inductive argument.
\if 0
\begin{align*}
& \langle \gamma_{d+2} \rangle=A^2 \langle \gamma_d \rangle
+A^{-3d-2}-A^{-3d+2}+A^{-3d+6}-A^{-3d+10} \\
=& A^2 \left(
A^{-3d+4}-A^{-3d+8}+2\left( \sum_{i=1}^{d-4} (-1)^{i-1} A^{-3d+4i+8} \right)
+A^{d-4}-A^d+A^{d+4} \right) \\
&+A^{-3d-2}-A^{-3d+2}+A^{-3d+6}-A^{-3d+10} \\
=& 2(A^{-3d+6}-A^{-3d+10})+2\left( 
\sum_{i=1}^{d-4} (-1)^{i-1} A^{-3d+4i+10} \right) \\
&+A^{-3d-2}-A^{-3d+2}+A^{d-2}-A^{d+2}+A^{d+6} \\
=& A^{-3(d+2)+4}-A^{-3(d+2)+8}+2\left(
\sum_{i=1}^{d-2} (-1)^{i-1} A^{-3(d+2)+4i+8} \right) \\
&+A^{(d+2)-4}-A^{d+2}+A^{(d+2)+4} \\
=& q_{d+2}.
\end{align*}
\fi
\end{proof}

\begin{thm}
Let $d \ge 4$.
Any even integer $l\in \{0,6,8,\ldots,4d\}$ is $d$-realizable.
\end{thm}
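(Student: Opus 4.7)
The plan is to realize each target $l$ by an explicit $d$-chord diagram assembled via the stacking operation (Proposition \ref{prop:stacking}) from a small library of building blocks. Since $\spn \langle C_1 \rangle = 0$, any $k$-chord diagram $C$ with $\spn \langle C \rangle = l$ extends to the $d$-chord diagram $C \sharp C_1^{\sharp(d-k)}$ of the same span. Thus it suffices to produce, for each $l$, some diagram on at most $d$ chords realizing that span, and then pad with copies of $C_1$ to reach exactly $d$ chords.

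The basic building blocks are $C_1$, $C_2$, $C_3$ (with $1,2,3$ chords and spans $0,6,10$, from the discussion at the start of \S 4) together with the family $C(k)$ of Examples \ref{ex:odd} and \ref{ex:even}, which has $k$ chords and span $4k$ for every $k \ge 3$. Stacking these immediately realizes: $l = 0, 6, 10$ via $C_1, C_2, C_3$ respectively; $l = 4k$ for $3 \le k \le d$ via $C(k)$; and $l = 4k+2$ for $4 \le k \le d-1$ via $C_2 \sharp C(k-1)$, which has $k+1 \le d$ chords and span $6 + 4(k-1) = 4k+2$ by Proposition \ref{prop:stacking}. A bookkeeping check shows that this already covers every even value in $\{0, 6, 8, 10, \ldots, 4d\}$ except for the two anomalous cases $l = 8$ (which would need the non-existent $C(2)$) and $l = 14$ (which would need $C_2 \sharp C(2)$).

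To finish, we exhibit two explicit $4$-chord diagrams $P_8$ and $P_{14}$ with $\spn \langle P_8 \rangle = 8$ and $\spn \langle P_{14} \rangle = 14$, verifying the spans by computing the bracket polynomials directly (each is a sum over the $2^4 = 16$ states, using the orbit-counting recipe of \S 3). Since $d \ge 4$, the diagrams $P_8 \sharp C_1^{\sharp(d-4)}$ and $P_{14} \sharp C_1^{\sharp(d-4)}$ then have $d$ chords and the required spans, completing the proof. The main obstacle is locating these two anomalous primitives: the preliminary computation recorded above shows that every $3$-chord diagram has $\spn \langle C \rangle \in \{0, 6, 10, 12\}$, so the values $8$ and $14$ cannot arise from fewer than $4$ chords, and no stacking of our existing building blocks produces them; an explicit search (or a construction tailored, for instance, by modifying $C(4)$) is required.
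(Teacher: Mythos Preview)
Your proposal is correct and follows essentially the same approach as the paper: both use padding via $C_1$-stacking, the family $C(k)$ for $\spn = 4k$, the combination $C_2 \sharp C(k-1)$ for $\spn = 4k+2$, and two ad hoc $4$-chord diagrams for the exceptional values $8$ and $14$ (the paper's presentation is phrased as an induction on $d$, but the content is identical). The paper supplies the two primitives you are missing: $C_4 = \{(1,4),(2,7),(3,5),(6,8)\}$ with $\langle C_4\rangle = A^4+A^2+1-A^{-2}-A^{-4}$ (span $8$), and $C'_4 = \{(1,5),(2,4),(3,7),(6,8)\}$ with $\langle C'_4\rangle = A^8+A^6-A^4-A^2+1+A^{-2}-A^{-6}$ (span $14$).
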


\begin{proof}
Let $d=4$.
First, $0$, $6$, $10$, and $12$ are $4$-realizable.
To see this, for any $l\in \{0,6,10,12\}$ pick an oriented chord diagram $D$ of $3$ chords with $\spn D=l$; then the stacking $C=C_1\sharp D$ satisfies $\spn \langle C\rangle=l$.
Next, $8$ is $4$-realizable, since $C_4=\{(1,4),(2,7),(3,5),(6,8)\}$ satisfies $\langle C_4\rangle=A^4+A^2+1-A^{-2}-A^{-4}$, so that $\spn \langle C_4\rangle=8$; also $14$ is $4$-realizable, since $C'_4=\{(1,5),(2,4),(3,7),(6,8)\}$ satisfies $\langle C'_4 \rangle=A^8+A^6-A^4-A^2+1+A^{-2}-A^{-6}$, so that $\spn \langle C'_4 \rangle=14$.
Finally, the element $C(4)$ in Example \ref{ex:even} satisfies $\spn \langle C(4) \rangle=16$.

Now let $d\ge 5$ and assume that any even integer $l\in \{0,6,8,\ldots,4(d-1)\}$ is $(d-1)$-realizable.
Then by considering the stacking of $C_1$ and oriented linear chord diagrams of $d-1$ chords, we see that any $l\in \{0,6,8,\ldots,4(d-1)\}$ is $d$-realizable.
Next, the element $C(d)$ in Examples \ref{ex:odd} and \ref{ex:even} satisfies $\spn \langle C(d) \rangle=4d$.
Finally, the stacking $C=C_2\sharp C(d-2)$ satisfies $\spn \langle C \rangle=6+4(d-2)=4d-2$.

By induction on $d$, we obtain the assertion.
\end{proof}

It can be checked that $2$ and $4$ are not $d$-realizable for $d\le 6$.

\begin{quest}
Is there an oriented linear chord diagram $C$ such that $\spn \langle C\rangle=2$ or $4$?
\end{quest}

Finally, we study the case where the equality $\spn \langle C \rangle=4d$ holds.
This class of linear chord diagrams might be of interest
since it is closed under stacking by Proposition \ref{prop:stacking}.
Table 1 shows the number of linear chord diagrams
with $\spn \langle C \rangle=4d$ for a fixed integer $d\le 9$. 
To give another motivation, let us recall the following
classical result on characterization of alternating knots.

\begin{table}
\label{tbl:spn}
\caption{linear chord diagrams with $\spn \langle C \rangle=4d$}
\begin{tabular}{c|c|c|c|c|c|c|c|c}
$d$ & 2 & 3 & 4 & 5 & 6 & 7 & 8 & 9 \\
\hline
 & 0 & 2 & 4 & 12 & 84 & 338 & 1588 & 8588
\end{tabular}
\end{table}

\begin{thm}[\cite{KA1} \cite{MU1} \cite{Th87}]
Let $K$ be an oriented knot in $S^3$ and assume that the span of
the Jones polynomial $V_K(t)=\LL_K(t^{-1/4})$ is equal
to the minimum number of double points among all projection diagrams of $K$.
Then $K$ is alternating.
\end{thm}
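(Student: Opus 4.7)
The plan is to follow the classical Kauffman--Murasugi--Thistlethwaite strategy by reducing to the equality case of the bound $\spn \langle D \rangle \leq 4 d(D)$ from Theorem \ref{thm1.5} (whose proof is via Proposition \ref{prop4.1}, in the classical planar setting), and then showing that any diagram saturating this inequality must be a reduced alternating diagram. Concretely, for any projection diagram $D$ of $K$, the writhe normalization and the substitution $A=t^{-1/4}$ give $\spn V_K(t) = \spn \LL_D(A)/4 = \spn \langle D \rangle /4 \leq d(D)$. The hypothesis says this is an equality for some $D$ realizing the minimum number of double points of $K$, so every inequality in the proof of Proposition \ref{prop4.1} must be saturated for this $D$.

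Next, I would extract the two saturation conditions that this forces. First, Lemma \ref{lem:d+2} must be an equality: $\mu(s_A) + \mu(s_B) = d(D) + 2$. Second, no cancellation can occur among the extremal-degree contributions in $\sum_s \langle D|s\rangle \delta^{\mu(s)-1}$, so the maximum degree is realized only by the all-$A$ state $s_A$, the minimum only by $s_B$, and the surviving extremal coefficients are $\pm 1$. This second requirement is Kauffman's $\pm$-adequacy: at each crossing of $D$, the two arcs emerging from the $A$-splitting (respectively $B$-splitting) lie on distinct components of $D(s_A)$ (respectively $D(s_B)$).

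The final step is the combinatorial identification: a planar diagram $D$ satisfying both $\pm$-adequacy and $\mu(s_A) + \mu(s_B) = d(D) + 2$ is a reduced alternating diagram. The standard tools are the state graphs $G_A, G_B$, whose vertices are the circles of $D(s_A), D(s_B)$ and whose edges record crossings. The count $\mu(s_A) + \mu(s_B) = d(D) + 2$ is, via Euler's formula on $S^2$, equivalent to both $G_A$ and $G_B$ being the (connected) checkerboard graphs of $D$, i.e.\ $D$ is reduced; adequacy forbids loop edges in $G_A$ and $G_B$, which at each crossing translates precisely into the alternating condition on over/under strands. The main obstacle is this last combinatorial identification, and it is also where the planarity of $D$ (classical $K \subset S^3$, rather than the more general surface setting of the paper) is essential, since for higher-genus surfaces the span-equals-crossings equality need not force alternation. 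Since the hypothesis yields such a diagram $D$ of $K$, we conclude that $K$ admits a reduced alternating diagram, hence is alternating.
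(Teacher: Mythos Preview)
The paper does not give its own proof of this theorem; it is quoted from \cite{KA1}, \cite{MU1}, \cite{Th87} purely as motivation for the subsequent Theorem~\ref{thm:i+j}. So there is nothing in the paper to compare your argument against.

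Your outline is the classical one and is sound in architecture: for a minimal-crossing diagram $D$ the hypothesis forces $\spn\langle D\rangle=4d(D)$, hence every inequality in the proof of Proposition~\ref{prop4.1} is saturated, yielding both $\mu(s_A)+\mu(s_B)=d(D)+2$ and adequacy. The one genuine slip is in your last paragraph, where the roles of the two saturation conditions are reversed. The equality $\mu(s_A)+\mu(s_B)=d(D)+2$ (equivalently, the Turaev surface of $D$ is a sphere) is what forces $D$ to be \emph{alternating}: it is exactly the condition that the $s_A$- and $s_B$-state circles can be identified with the two checkerboard colour classes of complementary regions of $D$ on $S^2$. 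Adequacy---no loop edges in $G_A$ or $G_B$---is what forbids nugatory crossings, i.e.\ gives \emph{reducedness}; it does not by itself give alternation (there exist adequate non-alternating diagrams). In fact reducedness already follows from minimality of $D$, so once you have the $\mu$-equality the conclusion ``$K$ is alternating'' is immediate and adequacy is not needed. With this correction your sketch is the standard proof.
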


Let $C$ be an oriented linear chord diagram of $d$ chords,
and let $\gamma$ be a curve on an oriented surface $S$ such that
$C_{\gamma}=C$.
Let $s_A$ and $s_B$ be the states of $D_{\gamma}$ which appeared in the proof of Proposition \ref{prop4.1}.
Suppose that $\spn \langle C\rangle=4d$.
Then, as we see from the proof of Proposition \ref{prop4.1},
we have
\begin{equation}
\label{eq:sasb}
\mu(s_A)+\mu(s_B)=d+2.
\end{equation}

\begin{rem}
The condition (\ref{eq:sasb}) does not imply $\spn  \langle C\rangle=4d$.
For example, let $C=\{ (1,8), (2,5), (3,6), (4,7)\}$.
Then $\mu(s_A)+\mu(s_B)=6$.
However, $\langle C\rangle=A^8+1-A^{-4}$ and
$\spn \langle C\rangle=12\neq 16$.
\end{rem}

Let us consider the following condition for $C$.
\begin{equation}
\label{eq:parity}
\text{for any chord $(i,j)$ of $C$, the parity of $i$ and $j$ are different.}
\end{equation}

\begin{thm}
\label{thm:i+j}
Keep the notation as above.
Then condition $(\ref{eq:sasb})$ implies condition $(\ref{eq:parity})$.
In particular, if $\spn \langle C \rangle=4d$, then
condition $(\ref{eq:parity})$ holds.
\end{thm}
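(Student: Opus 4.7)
The plan is to translate condition (\ref{eq:sasb}) into a topological statement about the surface supporting $\gamma$, and then deduce the parity condition from the classical Gauss realizability criterion for planar curves. By the remark after the definition of $C_{\gamma}$, I may realize $C=C_{\gamma}$ for some curve $\gamma$ on an oriented surface, and I would take $\Sigma$ to be the regular neighborhood of $\gamma(I)$, so that $\Sigma$ deformation retracts onto the $4$-valent graph $G=D_{\gamma}$ (with the $d$ double points and the two endpoints of $\gamma$ as vertices, and the $2d+1$ sub-intervals $I_0,\dots,I_{2d}$ as edges). In particular $\chi(\Sigma)=(d+2)-(2d+1)=1-d$, and $\Sigma$ is determined up to homeomorphism by the combinatorial data of $C$ together with the signs of its chords.

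The first step is to argue, by a checkerboard-type argument adapted to the present setting, that $\mu(s_A)+\mu(s_B)$ equals the number of complementary regions of $G$ in $\Sigma$ (after any non-disk region has been subdivided into disks). The two smoothings $D(s_A)$ and $D(s_B)$ together recover $G$ (the A- and B-resolutions at a crossing superimpose to give the $4$-valent pattern), and bound the A-colored and B-colored regions respectively in a checkerboard coloring of $\Sigma\setminus G$. Applying Euler's formula together with $\chi(\Sigma)=2-2g(\Sigma)-b(\Sigma)$, one finds that condition (\ref{eq:sasb}) is equivalent to $\Sigma$ being planar (i.e.\ of genus $0$).

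Once $\Sigma$ is planar, $\gamma$ can be redrawn as an arc in $S^{2}$, or equivalently in a disk. Classical Gauss realizability then gives: for any arc in a disk with transverse self-intersections, every chord of its Gauss code is interlaced with an even number of other chords. For a chord $(i,j)$ with $i<j$, the number $j-i-1$ of other-chord endpoints strictly between $i$ and $j$ has the same parity as the number of chords interlacing $(i,j)$, because each non-interlacing chord contributes $0$ or $2$ endpoints to $(i,j)$. Hence $j-i-1$ is even, equivalently $j-i$ is odd, which is exactly condition (\ref{eq:parity}).

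The main obstacle is the precise checkerboard identification in the first step: the fact that $\gamma$ has two endpoints on $\partial\Sigma$, and that $\Sigma$ itself has boundary, complicates the classical formula, which is usually stated for closed link diagrams on closed surfaces. One way to handle this is to close $\gamma$ into a loop on a closed double of $\Sigma$ (capping off $\partial\Sigma$ and connecting $\gamma(0)$ to $\gamma(1)$ by an external arc), apply the closed-surface checkerboard formula there, and then transfer the conclusion back, carefully tracking how capping affects the region count and how closing the arc affects $\mu(s_A)$ and $\mu(s_B)$. Alternatively one can attempt a purely combinatorial induction on $d$, based on the recursion in the proof of Proposition \ref{prop4.1}: the equality $\mu(s_A)+\mu(s_B)=d+2$ must propagate through every reduction step, and I would expect the extra constraint at each step to translate directly into the parity condition at the chord being split.
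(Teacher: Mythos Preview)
Your approach diverges from the paper's and has a genuine gap in the first step.

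The paper does \emph{not} work with the regular neighborhood $\Sigma=N$ you describe. Instead it builds a modified surface $N'$ by inserting two half-twisted bands at every double point of $\gamma$, arranged so that $\partial N'\setminus\partial S$ splits into an $A$-labelled part homeomorphic to $D(s_A)$ and a $B$-labelled part homeomorphic to $D(s_B)$; this yields $r'=\mu(s_A)+\mu(s_B)-1$ for the number of boundary circles of $N'$. Since $\chi(N')=1-d$, condition~(\ref{eq:sasb}) forces $N'$ to be \emph{orientable} (a non-orientable model would need cross-cap genus $0$). Finally, a Stiefel--Whitney class computation shows $N'$ is orientable iff every chord $(i,j)$ has $|i-j|$ odd, which is exactly~(\ref{eq:parity}).

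Your checkerboard claim---that $\mu(s_A)+\mu(s_B)$ equals the number of complementary regions of $G$ in $\Sigma$---does not hold for the untwisted ribbon $\Sigma=N$. That identification requires the diagram to be alternating on the capped-off surface (equivalently, that the $A$-smoothing at every crossing open into the same checkerboard colour), whereas $D_\gamma$ is the \emph{ascending} diagram and is typically non-alternating. In fact the face count for $G\subset\widehat{\Sigma}$ is $b(\Sigma)=d+1-2g(\Sigma)\le d+1$, so it can never equal $d+2$; your proposed equivalence ``(\ref{eq:sasb}) $\Leftrightarrow$ $\Sigma$ planar'' therefore fails even numerically. The half-twists in the paper's $N'$ are exactly the Turaev-surface modification needed to make the $A/B$ boundary identification valid, and what one then controls is the \emph{orientability} of $N'$, not the planarity of $N$. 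Your step~2 (the Gauss interlacement parity criterion) is fine on its own, but it is fed by a planarity conclusion that step~1 does not deliver; neither of the workarounds you sketch repairs this, since the closed-surface checkerboard formula you hope to invoke already presupposes the alternating property you lack.
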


To prove this, let us consider the following preliminary construction.
Let $N$ be a regular neighborhood of $\gamma$ in $S$.
We modify $N$ in a neighborhood of every double point of $\gamma$ by inserting two half-twisted bands as illustrated in Figure 16.
The result is denoted by $N'$, in which
the curve $\gamma$ embeds naturally.
Next, we give a labelling $A$ or $B$ to each boundary component
of a neighborhood in $N'$ of each double point as shown in Figure 16.
Then, this labelling extends naturally to a locally constant function
$\phi\colon \partial N' \setminus \partial S \to \{ A,B\}$.
From the construction, we see that the inverse image $\phi^{-1}(A)$ (resp. $\phi^{-1}(B)$) is homeomorphic to the splice of $D_{\gamma}$ by $s_A$ (resp. $s_B$).
Therefore, if we denote by $r'$ the number of boundary components of $N'$, we have
\begin{equation}
\label{eq:r'}
r'=\mu(s_A)+\mu(s_B)-1.
\end{equation}

\begin{center}
\label{fig:Nprime}
\begin{figure}
\input{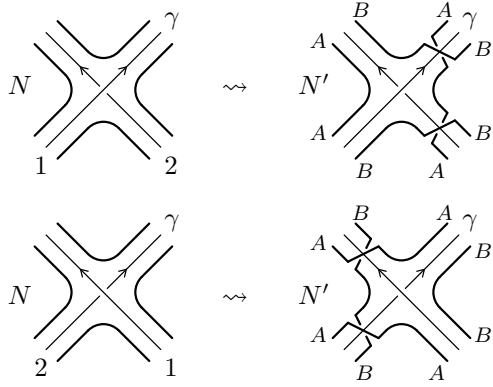}
\caption{the construction of the surface $N'$}
\end{figure}
\end{center}

\begin{lem}
\label{lem:orientable}
The surface $N'$ is orientable if and only if
condition $(\ref{eq:parity})$ holds.
\end{lem}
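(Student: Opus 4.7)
My plan is to decide orientability of $N'$ by attempting to construct a global orientation and locating the obstructions. Since $N'$ deformation retracts onto $\gamma(I)$, an orientation amounts to a choice of side $\epsilon_k \in \{\pm 1\}$ of the ribbon along each arc of $\gamma$ (for $k=1,\dots,2d+1$, with $\epsilon_k = +1$ agreeing with the orientation inherited from the embedding $N \subset S$). Propagating an orientation along $\gamma$, each traversal of a preimage $p_k$ of a double point encounters the inserted half-twisted band, which forces $\epsilon_{k+1} = -\epsilon_k$. Iterating gives $\epsilon_k = (-1)^{k-1}\epsilon_1$ as the unique assignment compatible with propagation along $\gamma$.

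The remaining consistency to check is at each double point, where the two strands of $\gamma$ meet and the two inserted bands interact. At the double point corresponding to a chord $(i,j)$, arcs $i, i+1$ form one strand and arcs $j, j+1$ form the other. A careful analysis of Figure 16, tracing how the two half-twisted bands attach to the four boundary arcs (labelled alternately $A$ and $B$) around the crossing and how these interact with the inherited orientation of $S$, yields a compatibility relation of the form $\epsilon_i = \epsilon_{j+1}$. Substituting $\epsilon_k = (-1)^{k-1}\epsilon_1$ converts this into $(-1)^{i-1} = (-1)^j$, i.e.\ $i+j$ is odd, which is precisely the statement that $i$ and $j$ have different parities. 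Running this argument at every chord gives the equivalence between orientability of $N'$ and condition $(\ref{eq:parity})$; conversely, under $(\ref{eq:parity})$ the assignment $\epsilon_k=(-1)^{k-1}$ satisfies every local constraint and so yields a genuine orientation of $N'$.

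The main obstacle is the careful derivation of the compatibility relation at a double point. One must read off from Figure 16 how the four boundary arcs are attached to the two half-twisted bands, verify that the induced sign in the compatibility equation is $+1$ rather than $-1$, and confirm that this is independent of whether the chord $(i,j)$ is positive or negative; this last point should follow from the symmetry of the half-twisted band construction in the two strands, but must be checked against the precise local model.
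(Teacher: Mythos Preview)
Your route is genuinely different from the paper's. The paper argues via the first Stiefel--Whitney class: the loops $c_k=[\gamma|_{[t_k^1,t_k^2]}]$ for $k=1,\dots,d$ form a $\mathbb{Z}$-basis of $H_1(N';\mathbb{Z})$, and $w_1(c_k)$ is simply the number of inserted half-twisted bands met along $c_k$, which is $|i_k-j_k|+1$. Orientability then reduces to $|i_k-j_k|+1\equiv 0\pmod 2$ for every $k$, i.e.\ condition~(\ref{eq:parity}). Your idea of propagating a transverse orientation $\epsilon_k$ along the arcs of $\gamma$ and then checking compatibility at each crossing is essentially the cochain--level version of the same computation, so it is valid in principle.

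However, as written the proposal is not a proof: the decisive local step is explicitly deferred. You assert the cross-strand relation $\epsilon_i=\epsilon_{j+1}$, but whether this (rather than, say, $\epsilon_i=\epsilon_j$) is the correct relation depends on \emph{which} two of the four arms at the crossing carry the half-twists; a wrong choice flips the parity conclusion. Figure~16 has two rows precisely because the local model depends on the order of the strands, so your remark that this ``must be checked against the precise local model'' is exactly the content of the lemma. Until that check is actually performed---for both positive and negative chords---the argument remains a heuristic. The advantage of the paper's $w_1$ computation is that it bypasses this bookkeeping: along the loop $c_k$ one traverses both half-twists at $p_k$ and exactly one at each of the $|i_k-j_k|-1$ intermediate positions, giving $|i_k-j_k|+1$ without any case split on chord sign.
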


\begin{proof}
Let $\{p_k\}_{k=1}^d$ be the set of double points of $\gamma$,
and fix a parametrization $\gamma \colon I\to S$.
For each $k$, write $\gamma^{-1}(p_k)=\{t^1_k,t^2_k\}$ so that
$t^1_k<t^2_k$, and let $c_k\in H_1(N';\mathbb{Z})$ be
the homology class of the loop defined as the restriction of
$\gamma$ to $[t^1_k,t^2_k]$.
Then, the set $\{ c_k\}_{k=1}^d$ constitutes a $\mathbb{Z}$-basis for $H_1(N';\mathbb{Z})$.

Now, let $w_1\in H^1(N';\mathbb{Z}_2)\cong
{\rm Hom}(H_1(N';\mathbb{Z}), \mathbb{Z}_2)$ be the first
Stiefel-Whitney class of the tangent bundle of $N'$.
Let $(i_k,j_k)$ be the chord of $C$ corresponding to $p_k$.
Then, $w_1(c_k)$ is just the number of inserted half-twisted bands along the representative of $c_k$, and this is equal to
$|i_k-j_k|+1$.
Since $N'$ is orientable if and only if $w_1=0$, the assertion follows.
\end{proof}

\begin{proof}[Proof of Theorem \ref{thm:i+j}]
Since $N'$ is homotopy equivalent to the bouquet of $d$ circles,
the Euler characteristic of $N'$ is $\chi(N')=1-d$.

Assume that $N'$ is unorientable.
Since $r'=d+1$ from (\ref{eq:sasb}) and (\ref{eq:r'}),
there exists an integer $g>0$ and
$N'$ is homeomorphic to a connected sum of $g$ copies of $\mathbb{R}P^2$ minus the interior of $d+1$ disjoint union of closed disks.
Hence
\[
\chi(N')=2-g-(d+1)=1-g-d.
\]
Thus $g=0$, a contradiction.
Therefore $N'$ is orientable, and the conclusion
follows by Lemma \ref{lem:orientable}.
\end{proof}

\vspace{1em}

\noindent \textbf{Acknowledgements.}
The authors would like to thank Haruko Miyazawa and Akira Yasuhara for their helpful comments.
S. F. was supported by JSPS KAKENHI 26400098.
Y. K. was supported by JSPS KAKENHI 26800044.

\end{document}